\theoremstyle{plain}
\newtheorem{theorem}{Theorem}[section]
\newtheorem{corollary}[theorem]{Corollary}
\newtheorem{proposition}[theorem]{Proposition}
\newtheorem{lemma}[theorem]{Lemma}
\newtheorem{question}[theorem]{Question}
\theoremstyle{definition}
\newtheorem{definition}[theorem]{Definition}
\newtheorem{remark}[theorem]{Remark}
\newtheorem{fact}[theorem]{Fact}
\newtheorem{example}[theorem]{Example}
\newtheorem{problem}[theorem]{Problem}
\newcommand{\HG}{\mathbb{H}(\omega)}
\newcommand{\Mt}{M_t}
\begin{document}

\title{Generalized Heisenberg Groups and self-duality}

\author[Bonatto]{Marco Bonatto}
\address[M. Bonatto]{\hfill\break
Charles University
\hfill\break
KA MFF UK, Sokolovsk\'{a} 83, 18675 Praha 8 
\hfill\break
Czech Republic}
\email{marco.bonatto.87@gmail.com} 

\author[Dikranjan]{Dikran Dikranjan}\address[D. Dikranjan]{\hfill\break
Dipartimento di Matematica e Informatica
\hfill\break
Universit\`{a} di Udine
\hfill\break
Via delle Scienze  206, 33100 Udine
\hfill\break
Italy}
\email{dikranja@dimi.uniud.it}

\keywords{{self dual group, Heisenberg group, nilpotent groups of class 2}}

\subjclass[2010]{22A05, 22B05, 22D05, 22D10, 22D35, 43A70}

\begin{abstract}
%\begin{abtsract}
This paper compares two generalizations of Heisenberg groups and studies their connection to one of the major open problems in the field of locally compact abelian groups, namely the description of the self-dual locally compact abelian groups (\cite{H0}, \cite{H}). 
The first generalization is presented by the so called {\em generalized Heisenberg groups} $\HG$, defined in analogy with the classical Heisenberg group, and the second one is inspired by the construction proposed by Mumford in \cite{Mu} and named after him as {\em Mumford groups}.\\
These two families can be defined also in the framework of topological groups. We investigate the relationship between locally compact Mumford groups, locally compact generalized Heisenberg groups with center isomorphic to $\mathbb{T}$, and symplectic self-dualities.
%\end{abtsract}
%\tableofcontents
\end{abstract}

\maketitle
\section{Introduction}
%\textcolor{red}

\subsection{The Heisenberg group and its generalizations}

For a commutative unitary ring $R$ one can define  the linear group of all unitriangular matrices:
$$
\mathbb H_R:= \left (\begin{array}{ccc} 1 & b & a\\
0 & 1 & c\\
0 & 0 & 1\\
 \end{array}\right)
 $$
where $a,b,c \in R$. When $R= \mathbb R$ is the ring of reals, one obtains the classical real 3-dimensional Heisenberg group $\mathbb{H}_{\mathbb R}$. 
%As shown below, $H(\R)$ is isomorphic to the semidirect product $(\R \oplus \R) \leftthreetimes \R $ of $\R \oplus \R$ and $\R$. 
This group and its higher-dimensional versions are important in analysis and quantum mechanics. The name ``Heisenberg group" is motivated by the fact that the Lie algebra of $\mathbb{H}_\mathbb R$ gives the Heisenberg commutation relations in quantum mechanics. Even the smallest commutative unitary ring $\mathbb Z_2$ provides a quite interesting Heisenberg group, as $\mathbb{H}_{\mathbb Z_2}$ is the dihedral group $D_4$. 

Here we study two generalizations of Heisenberg groups and their connection to self-dual locally compact abelian groups (this issue will be discussed with more details in \S \ref{SD}). 

The first generalization due to Megrelishvili is presented by the so called {\em generalized Heisenberg groups} $\HG$, defined in analogy with $\mathbb H_R$. The elements of $\HG$ can still be thought of as being matrixes as above, with entries taken from three arbitrarily chosen abelian groups $E, F$ and $A$, so that $b \in E$, $a\in A$ and $c\in F$ and the multiplication follows the usual rule of matrix multiplication. Accordingly, in the product 
$$
\left (\begin{array}{ccc} 1 & b & a\\
0 & 1 & c\\
0 & 0 & 1\\
 \end{array}\right)\left (\begin{array}{ccc} 1 & b' & a'\\
0 & 1 & c'\\
0 & 0 & 1\\
 \end{array}\right)
$$
the only non-trivial ``product" $b\cdot c':=\omega(b,c')$ is given by a bilinear map $\omega: E\times F\longrightarrow A$ assigned in advance (see Definitions \ref{LinMap} and \ref{definition generalized H}). This operation makes 
$\HG$ a nilpotent group of class 2, a central extension with $Z(\HG)\cong A$ and $\HG/Z(\HG) \cong E\times F$. When $E, {F}$ and ${A}$ are topological abelian groups and $\omega$ is continuous,  $\HG$ is a topological group (when equipped with the product topology), largely used in view of the flexibility in the choice of the parameters $E, F, A$ and $\omega$. 
In particular, these groups were prominently used in the theory of minimal groups for building relevant examples and counter-examples 
in the non-abelian case because of their simple algebraic structure  (\cite{DM,DM1,DS,Me1,Me2,Mi,Re,Shlo}). 

A relevant example is obtained by taking a LCA group $E$, its Pontryagin dual $F = \widehat{E}$ and the standard evaluation bilinear map $\omega: E\times \widehat{E}\longrightarrow \mathbb T$. These Heisenberg groups, denoted by $\mathbb H(E)$ and called  {\em Mackey-Weil groups} (after \cite{Weil}), are inspired by the classical 
\emph{Weyl-Heisenberg group} $\mathbb H(\mathbb R^n)$. 

Another route for generalization of Heisenberg groups was proposed by Mumford (\cite{Mu}), who generalized the notion of a Mackey - Weil group as follows. For every nilpotent group $G$ of class 2 consider the factorization $B : G/Z(G) \times G/Z(G) \to Z(G)$ of the commutator map $G\times G \to Z(G)$ through the Cartesian square of the canonical map $G \to G/Z(G)$. This gives rise to a canonical map:
 \begin{equation}\label{Intro}
M: G/Z(G)\longrightarrow Hom(G/Z(G),Z(G)), \quad M(xZ(G))(yZ(G)):=B(xZ(G), yZ(G)) = [x,y]
\end{equation}
that  we call {\it Mumford map} (in other words,  $M$ is the evaluation map of $B$, see also Fact \ref{charact of N_2} and Definition \ref{Mumford map}).
In case $G$ carries a locally compact group topology and $Z(G)\cong \mathbb T$, the target group in (\ref{Intro}) is nothing else but the Pontryagin dual of $G/Z(G)$ (see Remark \ref{topological commutator} for more detail).
In order to describe the properties of such a group $G$ related to its irreducible unitary representations, Mumford \cite{Mu} imposed additionally the condition that $M$ is a topological isomorphism (these groups were used also in \cite{Prasad,Vemuri}). Clearly, every Mackey - Weil group has this property.

Here we  adopt a more general setting, by removing any restraint on $Z(G)$. We only keep the condition {on the Mumford map} to be a topological isomorphism and we call these groups {\em Mumford groups} (Definition \ref{M-groups}). Clearly, the locally compact Mumford groups $G$ with 
center  topologically isomorphic to $\mathbb T$ are precisely the above mentioned groups isolated in \cite{Mu,Prasad,Vemuri}. We give criteria for a generalized Heisenberg group to be a Mumford group, and we give criteria for a Mumford group to be a generalized Heisenberg group. It turns out that the intersections of these two classes, in the case of locally compact groups with center $\mathbb T$, is precisely the class of Mackey - Weil groups (Theorem \ref{gen iff sss}).

\subsection{Self-dual LCA groups}\label{SD}

A locally compact abelian group $L$ is said to be self-dual if there exists a topological group isomorphism
\begin{equation}\label{SSSSD}
\nabla: L \to \widehat{L},
\end{equation}
called  {\em self-duality} of $L$, in such a case the group $L$ is called {\em self-dual}. Many locally compact abelian groups  are known to be self-dual, examples include (but are not limited to) all finite groups, the reals $\mathbb R$, the fields $\mathbb Q_p$ of $p$-adic numbers (actually, the underlying group of any locally compact field), etc. 

Nevertheless, the following question (this is Question 505 in  \cite{WC}, previously mentioned also in \cite{HR}, \cite{A})
remains one of the most prominent open problems in the area of dualities and locally compact abelian groups: 

\begin{problem} \cite[Question 3A.1]{WC}	Classify the self-dual locally compact abelian groups.	
\end{problem}

One can find an extended literature on this problem. The ``classical line" of study of the self-dual locally compact abelian groups imposes 
 eventual additional restraints on the structure of the groups (\cite{H0,H,RS,RS1,Wu}). A somewhat more recent line  
 prefers to avoid imposing structural restraints on the groups involved, but makes heavy use of homological algebra \cite{FH1,FH2}, or
uses essentially properties on the bilinear form $b_\nabla: L\times L \longrightarrow \mathbb{T}$ associated to the  self-duality $(L,\nabla)$ (\cite{Prasad,Vemuri}), defined as follows: $b_\nabla(x,y) = \nabla(x)(y)$
for $x,y \in L$. In the opposite direction, self-dualities are often built via appropriate bilinear forms $\omega: L\times L \longrightarrow \mathbb{T}$.
 In the case of a locally compact field $L$, this is simply the field multiplication in $L$ and this form $\omega$ 
 is symmetric (Definition \ref{LinMap}(a$_3$)), although in other cases the bilinear form of a self-duality can be {\em alternating} (Definition \ref{LinMap}(a$_2$)), and in such a case one speaks of {\em symplectic self-dualities} (\cite{Prasad}). 
 
 The connection between locally compact Mumford groups $G$ with $Z(G) \cong \mathbb T$ and self-dualities is clear comparing (\ref{Intro}) (with $Z(G) \cong \mathbb T$) and (\ref{SSSSD}) (with $ L = G/Z(G)$), as pointed out already in \cite{Prasad}. For more details about the relation between generalized Heisenberg groups, Mackey - Weil groups and Mumford groups, on one hand, and  symplectic self-dualities on the other hand, see Theorem \ref{gen iff sss}. \\

The first named author would like to thank Prof. Libor Barto and Prof. David Stanovsk\'y for the support provided during his visiting research period at the Algebra department of the Charles University in Prague.

The second named author would like to sincerely thank Maria Jesus Chasco and Sergio Ardanza for pointing out to him the relevant paper \cite{Prasad}, as well as numerous enlightening discussions on self-duality during his visit in Pamplona in October 2011. This paper would not appear 
without the longs years of fruitful collaboration with Michael Megrelishvili allowing the first named author to realize the potential power of generalized Heisenberg groups as a formidable supply of relevant examples in topological group theory. 

\section{Background from group theory}

Let us recall first some well-known notions used in the sequel. As usual, for  a group  $G$ the map:
\[[\cdot,\cdot]:G\times G\rightarrow G, \quad (x,y)\mapsto [x,y]=x^{-1}y^{-1} xy\]
is called \textit{the commutator map} and $[x,y]$ is called \textit{the commutator of x and y}. The subgroup:
\[[G,G]=\langle [x,y],\ x,y\in G\rangle \]
is called \textit{the commutator subgroup}.

For any group $G$  the commutator map $G\times G \to  [G,G]$ can be factorized through the canonical map $\pi :  G \to  G/Z(G) $. More precisely, 
 the map $B$ obtained by setting 
\begin{equation}\label{B}
B:G/Z(G)\times G/Z(G)\longrightarrow [G,G],\quad (xZ(G),yZ(G))\mapsto [x,y]
\end{equation}
is well defined and the following diagram is commutative:
\medskip

\begin{equation}\label{diagNEW}
\xymatrixcolsep{5pc}\xymatrix{
G\times G\ar[r]^-{\pi\times\pi}  \ar[dr]^{[\cdot,\cdot]} & G/Z(G) \times G/Z(G) \ar[d]^{B}\\
& [G,G] &
}
\end{equation}

\subsection{Bilinear Maps}
\begin{definition}\label{LinMap}
Let $E,F$ and $A$ be Abelian groups. A map $\omega: E\times F\longrightarrow A$ is said to be {\em bilinear} if
\begin{displaymath}
\omega(x+x',y)=\omega(x,y)+\omega(x',y)\  \ \mbox{ and }\  \ \omega(x,y+y')=\omega(x,y)+\omega(x,y')
\end{displaymath} for every $x,x'\in E$ and every $y,y'\in F$. 

\begin{itemize}
\item[(a)] A bilinear map $\omega$ is said to be:
\begin{itemize}
\item[(a$_1$)] {\em separated} if for every $0\neq x\in E$ there exists $y\in F$ such that $\omega(x,y)\neq 0$ and for every $0\neq y'\in F$ there exists $x'\in E$ such that $\omega(x',y')\neq 0$;

\item[(a$_2$)]  if $E=F$, $\omega$ is {\em alternating} if $\omega(x,x)=0$ for every $x\in E$;
\item[(a$_3$)] if $E=F$, $\omega$ is {\em symmetric} if $\omega(x,y)=\omega(y,x)$ for all $x,y\in E$.
\end{itemize}
\item[(b)] If $E=F$, a subgroup $H$ of $E$ is said to be {\em isotropic with respect to} $\omega$, if $\omega (H,H) = 0$ (i.e., $\omega(x,y)=0$ for every $x,y\in H$). 
\item[(c)] Let $E_1$ be an abelian group  and let $\omega_1: E_1\times E_1\longrightarrow A$ be a bilinear map. 
We say that $\omega$ and $\omega_1$ are {\em isomorphic}, if there exists an isomorphism $\xi: E \to E_1$, such that $\omega_1 \circ (\xi\times \xi) = \omega$.  
\end{itemize}

\end{definition}
%We give the definition above for Abelian groups but it applies to maps defined on the direct product of non-Abelian groups and target an Abelian group. Since we will use this notion for non-Abelian groups just in Fact \ref{charact of N_2}, we leave definition \ref{LinMap} in the current form.

\begin{remark} \label{omegaE}
Let $E,F$ and $A$ be Abelian groups and let $\omega$ be a map $\omega: E\times F\longrightarrow A$. Define the maps $\omega_E$ and $\omega_F$ by setting:
\begin{eqnarray}
\omega_E:E\longrightarrow A^F , &\quad &\omega_E(x)(y)=\omega(x,y)\label{B tilde1}\\
\omega_F:F\longrightarrow E^F,&\quad &\omega_F(y)(x)=\omega(x,y)\label{B tilde2}
\end{eqnarray}
for every $x\in E$, $y\in F$. The properties of the map $\omega$ depend on the properties of the maps \eqref{B tilde1} and \eqref{B tilde2}: 
\begin{itemize}
\item[(a)] $\omega$ is bilinear if and only if $\omega_E$ and $\omega_F$ are group homomorphisms which take values in $Hom(F,A)$ and $Hom(E,A)$, respectively.
\item[(b)] $\omega$ is separated if and only if both $\omega_E$ and $\omega_F$ are injective and in this case $E$ embeds in $Hom(F,A)$ and $F$ embeds in $Hom(E,A)$ (and $Im(\omega_E)$ separates the points of $F$ and $Im(\omega_F)$ separates the points of $E$).

\end{itemize}
\end{remark}

\begin{remark}
\begin{itemize}
\item[(a)] Obviously, $\omega: E\times E\longrightarrow A$ is alternating if and only if every cyclic subgroup of $E$ is isotropic with respect to $\omega$.
\item[(b)] Leading examples of symmetric bilinear forms come from the ring multiplication $\omega: R\times R\longrightarrow R$ in a commutative ring $R$ 
(i.e., $\omega(x,y) = xy$, for $x,y \in R$). A relevant example of an alternating bilinear form of a different nature can be found in Example \ref{Exa:ssd}. 
\end{itemize} 
\end{remark}

\subsection{The class $\mathcal{N}_2$ and the Mumford map}\index{$\mathcal{N}_2$}

A group $G$ is said to be a {\em nilpotent group of class 2}, if $G/Z(G)$ is abelian, i.e., $[G,G]\leq Z(G)$. This class of groups will be denoted by $\mathcal{N}_2$. For the sake of convenience we collect here several easy equivalent properties which distinguish these groups:   

\begin{fact}\label{charact of N_2}  Let $G$ be a group, then the following are equivalent: 
\begin{enumerate}
 \item $G\in \mathcal{N}_2$, i.e., $G$ is a nilpotent group of nilpotency class $2$;
 \item $[G,G]\leq Z(G)$;
 \item the factorized commutator map $B$ defined by setting
\begin{displaymath}
B: G/Z(G)\times G/Z(G)\longrightarrow G, \quad (xZ(G),yZ(G))\mapsto [x,y]
\end{displaymath} 
is an alternating bilinear and separated map.
\end{enumerate}
\end{fact}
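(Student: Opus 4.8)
The statement decomposes into the purely definitional equivalence $(1)\Leftrightarrow(2)$ and the substantive equivalence $(2)\Leftrightarrow(3)$, so the plan is to dispatch the first and then prove the two implications of the second. For $(1)\Leftrightarrow(2)$ I would simply unwind the definition: $G/Z(G)$ is abelian precisely when $[xZ(G),yZ(G)]$ is trivial in $G/Z(G)$ for all $x,y$, i.e. when $[x,y]\in Z(G)$ for all $x,y\in G$; since $Z(G)$ is a subgroup and $[G,G]$ is generated by the commutators, this holds if and only if $[G,G]\le Z(G)$.

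For $(2)\Rightarrow(3)$, I would first recall that $B$ is well defined, as already recorded in diagram~\eqref{diagNEW}: replacing $x,y$ by $xz_1,yz_2$ with $z_1,z_2\in Z(G)$ does not change $[x,y]$, because central factors commute with everything and $z_1^{-1}z_2^{-1}z_1z_2=e$ (note this needs nothing beyond centrality of the $z_i$). Under $(2)$ the image of $B$ lies in the abelian group $Z(G)$, so the three properties of Definition~\ref{LinMap} are meaningful and I would check them in turn. The map is \emph{alternating} because $B(xZ(G),xZ(G))=[x,x]=e$. It is \emph{bilinear} by the standard commutator identities $[xy,z]=[x,z]^{y}[y,z]$ and $[x,yz]=[x,z][x,y]^{z}$: under $(2)$ every commutator is central, so the conjugations act trivially and the identities collapse to $[xy,z]=[x,z][y,z]$ and $[x,yz]=[x,y][x,z]$, which is exactly additivity in each variable. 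Finally it is \emph{separated}: if $xZ(G)\neq Z(G)$, i.e. $x\notin Z(G)$, then by the very definition of the centre there is some $y$ with $[x,y]\ne e$, so $B(xZ(G),yZ(G))\ne 0$, and the symmetric condition follows from $[y,x]=[x,y]^{-1}$.

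For the converse $(3)\Rightarrow(2)$ the bilinearity hypothesis is what does the work. Additivity of $B$ in the first variable says $[xx',y]=[x,y][x',y]$ for all $x,x',y\in G$; comparing this with the universal identity $[xx',y]=[x,y]^{x'}[x',y]$ and cancelling $[x',y]$ gives $[x,y]=[x,y]^{x'}$, i.e. each commutator $[x,y]$ commutes with every $x'\in G$. Thus all generators of $[G,G]$ lie in $Z(G)$, whence $[G,G]\le Z(G)$, which is $(2)$.

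I expect the only non-formal step to be this last implication, where the abstract bilinearity must be converted back into the group-theoretic assertion that commutators are central; the commutator expansion identity does this cleanly, but one must keep track of the convention $[x,y]=x^{-1}y^{-1}xy$ and be careful that ``bilinear'' is being read for a map whose domain $G/Z(G)$ and codomain $[G,G]$ are a priori only groups (the abelian structure on them being guaranteed exactly by what is to be proved). All remaining steps are routine commutator calculus together with the definition of the centre.
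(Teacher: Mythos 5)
Your proof is correct. The paper states this as a \emph{Fact} and supplies no proof at all (it explicitly treats these equivalences as ``easy'' and well known), and your argument --- the definitional unwinding of $(1)\Leftrightarrow(2)$, the commutator identities $[xy,z]=[x,z]^{y}[y,z]$ and $[x,yz]=[x,z][x,y]^{z}$ collapsing under centrality for $(2)\Rightarrow(3)$, and the cancellation $[x,y]=[x,y]^{x'}$ for $(3)\Rightarrow(2)$ --- is exactly the routine verification the authors are implicitly relying on, including the correct caveat about how ``bilinear'' must be read when the codomain is only known to be abelian after the fact.
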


%%%%%%%%%%%%%%%%%%%%%%%%%%%%%%%%%%%%
%\begin{proof}
%$(1)$ $\Rightarrow$ $(2)$ If $G\in\mathcal{N}_2$ then $Z_2(G)=q^{-1}(Z(G/Z(G)))=G$. So the quotient $G/Z(G)$ is Abelian. As stated by well-known results on commutator group it follows that $[G,G]\subset Ker(q)=Z(G)$.\\
%$(2)$ $\Rightarrow$ $(3)$ If $[G,G]\subset Z(G)$ then:  \[[x,y]=\lbrace e\rbrace\]
% for every $x\in [G,G]$, $y\in G$. Then $[[z,t],y]=\lbrace e\rbrace$ for every $z,t,y\in G$, or in other words:  
% \[[[G,G],G]=\lbrace e\rbrace\]
%$(3)$ $\Rightarrow$ $(1)$ It follows from the properties of the lower central series.
%\end{proof}
%%%%%%%%%%%%%%%%%%%%%%%%%%%%%%%

For a group $G\in \mathcal{N}_2$, the group $G/Z(G)$ is abelian and the factorized commutator map $B$ is bilinear (see Fact \ref{charact of N_2}). 
The related map $B_{G/Z(G)}$, as in Remark \ref{omegaE}, will be largely used in the paper, so we define it as follows.
\begin{definition}\label{Mumford map} Let $G\in \mathcal{N}_2$. The map $M$ defined by setting
\begin{equation}\label{mumford map formula}
M: G/Z(G)\longrightarrow Hom(G/Z(G),Z(G)), \quad M(xZ(G))(yZ(G)):=B(xZ(G), yZ(G)) = [x,y]
\end{equation}
for every $x,y\in G$, will be called the {\it Mumford map} of $G$.
\end{definition}
 The map $M$ characterizes the central extension of a given Abelian group $A$ up to isomorphism \cite[Proposition 3.5]{Prasad} (this proposition can be extended to arbitrary central extensions).

%Let $G$ be a group, then the map $B$ defined by setting  For the groups of the class $\mathcal{N}_2$ the commutator map has some further properties. The next results state that this map and could  \begin{lemma}\label{def of B}  Let $G$ be a group, then
%\begin{proof}
%Let $x,y\in G$, $z,u\in Z(G)$, $x'=x z$ and $y'=y u$. We have
%\begin{displaymath}
%[x'y]=[xz,yu]=z^{-1}[x,y]z[z,y]=[x,y]
%\end{displaymath}
%\begin{displaymath}
%[x,y']=[x,yu]=[x,y]{u}^{-1}[x,y]u=[x,y]
%\end{displaymath}
%then the map is well defined. By definition we have that \[(B\circ (\pi\times \pi))(x,y)=[x,y]\] for every $x,y\in G$. \end{proof}
%Note that if $G\in \mathcal{N}_2$ then we can set $\Mt:G/Z(G)\longrightarrow Hom(G/Z(G),Z(G))$.

\begin{lemma}\label{commutator map} For a group 
 $G\in \mathcal{N}_2$ let $K=G/Z(G)$. Then the Mumford map $M:K\longrightarrow Hom(K,Z(G))$ is an injective homomorphism. In particular $K$ embeds in $Z(G)^K$.
\end{lemma}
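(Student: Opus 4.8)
The plan is to recognize that the Mumford map $M$ is nothing but the map $\omega_E$ of Remark \ref{omegaE} applied to the factorized commutator pairing, and then to transfer the relevant properties of that pairing established in Fact \ref{charact of N_2}. Concretely, in the notation of Remark \ref{omegaE} I would take $\omega = B$, $E = F = K$ and $A = Z(G)$; then, comparing \eqref{mumford map formula} with \eqref{B tilde1}, the Mumford map $M$ coincides with $\omega_E = B_K$, since both send $xZ(G)$ to the homomorphism $yZ(G) \mapsto B(xZ(G), yZ(G)) = [x,y]$.

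With this identification in hand, the two assertions follow at once from the structural facts already available. Since $G \in \mathcal{N}_2$, Fact \ref{charact of N_2} guarantees that $B$ is an alternating, bilinear and separated map; in particular its values lie in $[G,G] \le Z(G)$, which is abelian, so that the additive bilinear formalism of Remark \ref{omegaE} genuinely applies. Applying Remark \ref{omegaE}(a) to $\omega = B$, the bilinearity of $B$ yields that $M = B_K$ is a group homomorphism whose image is contained in $Hom(K, Z(G))$, establishing that $M$ is a homomorphism. Applying Remark \ref{omegaE}(b), the separatedness of $B$ forces $M = B_K$ to be injective.

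For the final clause I would simply observe that $Hom(K, Z(G))$ sits inside the full group $Z(G)^{K}$ of all maps $K \to Z(G)$ under pointwise operations; composing the injective homomorphism $M$ with this inclusion exhibits $K$ as a subgroup of $Z(G)^{K}$. I do not anticipate a real obstacle, as the statement is a direct specialization of Remark \ref{omegaE} to the commutator pairing. The only place demanding a little care is the passage between the multiplicative commutator notation $[x,y]$ and the additive language of bilinear maps, which is legitimate precisely because $[G,G] \le Z(G)$ is abelian, together with the observation that it is the \emph{separatedness} of $B$ (Fact \ref{charact of N_2}), not mere bilinearity, that is responsible for the injectivity of $M$.
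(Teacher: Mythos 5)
Your argument is correct and is essentially the paper's own proof: both invoke Fact \ref{charact of N_2} to get that $B$ is bilinear and separated, and then apply Remark \ref{omegaE} (with $\omega = B$, $E=F=K$, $A=Z(G)$) to conclude that $M$ is an injective homomorphism into $Hom(K,Z(G))\leq Z(G)^K$. Your version merely spells out the identification $M = B_K$ and the passage from multiplicative to additive notation, which the paper leaves implicit.
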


\begin{proof}  The factorized map $B$ is bilinear and separated, by Fact \ref{charact of N_2}. According to Remark \ref{omegaE}, the map $M$ is an injective homomorphism and therefore $K$ embeds in $Z(G)^K$.
\end{proof}

%\begin{proposition}\label{commutator map}
%Let be $G\in\mathcal{N}_2$ and the short exact sequence
%\[\xymatrix{ %\lbrace e \rbrace  \ar[r] & Z(G) \ar[r] & G \ar[r] & G/Z(G)=K  \ar[r] &\lbrace e \rbrace %}\] The map:
%\begin{equation*}
%\Mt:K\longrightarrow Hom(K,Z(G))
%\end{equation*}
%is an injective homomorphism and  $G/Z(G)$ embeds in $Z(G)^K$. %\end{proposition}
%\begin{proof} %The map $B$ is separated by virtue of Proposition \ref{commutator map}. %\end{proof}
%\begin{lemma}\label{embedding} Let $G\in\mathcal{N}_2$ then $G/Z(G)$ embeds in $Z(G)^G$. \end{lemma}
%\begin{proof} Let $\lbrace f_g, g\in G\rbrace$ the family of homomorphisms defined by setting
%\[f_g:G\longrightarrow [G,G]\subset Z(G),\hspace{1 cm} x\mapsto [g,x]\]
%and $\nabla (f_g)$ be the diagonal map relative to the family defined above 
%\[\nabla (f_g):G\longrightarrow Z(G)^G\] %Then $ker (\nabla (f_g))=Z(G)$, so $G/Z(G)\simeq Im(f)\leq Z(G)^G$. \end{proof}
%So we can see that the quotient $G/Z(G)$ inherits properties of the center which are stable for the passage to products and subgroups.

\begin{lemma}\label{isotropic and abelian}
Let $G\in \mathcal{N}_2$ and let $\pi: G	\longrightarrow K=G/Z(G)$ be the canonical projection. Then $L \mapsto \pi^{-1}(L)$ defines a one-to-one 
monotone correspondence between isotropic subgroups of $K$ with respect ${B}$ and abelian subgroups of $G$ containing $Z(G)$
\end{lemma}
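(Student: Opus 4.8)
The plan is to reduce everything to the classical subgroup correspondence (the lattice isomorphism theorem) and then to check that, under this correspondence, the isotropic condition on the $K$-side matches exactly the abelian condition on the $G$-side. First I would recall that since $Z(G)=\ker\pi$, the assignment $L\mapsto\pi^{-1}(L)$ is an order-preserving bijection from the set of all subgroups of $K$ onto the set of all subgroups of $G$ containing $Z(G)$, with inverse $H\mapsto\pi(H)=H/Z(G)$. In particular $\pi^{-1}(L)\supseteq\pi^{-1}(\{0\})=Z(G)$ for every subgroup $L$ of $K$, and both the correspondence and its inverse are monotone.

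The only thing left is to verify that this bijection carries isotropic subgroups of $K$ to abelian subgroups of $G$ and back. For this, I would fix a subgroup $L$ of $K$ and put $H=\pi^{-1}(L)$, so that $\pi(H)=L$. For any $x,y\in H$ we have, by the definition \eqref{B} of the factorized commutator map,
\[
[x,y]=B(\pi(x),\pi(y))=B(xZ(G),yZ(G)),
\]
and as $x,y$ range over $H$ the images $\pi(x),\pi(y)$ range over all of $L$. Hence $H$ is abelian, i.e.\ $[x,y]=e$ for all $x,y\in H$, if and only if $B(\bar x,\bar y)=0$ for all $\bar x,\bar y\in L$, which is precisely the statement that $L$ is isotropic with respect to $B$ (recall that $[G,G]\le Z(G)$ is abelian, so its identity is the additively written $0$).

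Combining the two observations, the order-preserving bijection $L\mapsto\pi^{-1}(L)$ restricts to a bijection between the isotropic subgroups of $K$ and the abelian subgroups of $G$ containing $Z(G)$, and this restriction stays monotone because the full correspondence is. I do not expect any genuine obstacle here: the substance of the argument is the single translation between commutators and values of $B$, which is immediate from \eqref{B}, while the rest is the classical subgroup correspondence. The only point requiring a little care is to use both directions of the equivalence — that every abelian subgroup $H$ containing $Z(G)$ is of the form $\pi^{-1}(L)$ (guaranteed by the correspondence theorem) and that its image $\pi(H)$ is then isotropic — so that the map is genuinely \emph{onto} the class of abelian subgroups and not merely into it.
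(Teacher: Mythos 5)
Your argument is correct and is essentially the same as the paper's: the paper's proof simply notes that $H\leq G$ is abelian iff $[H,H]=\{e\}$ while $L\leq K$ is isotropic iff $B(L,L)=0$, leaving the standard subgroup correspondence implicit. You have merely spelled out the lattice-isomorphism step and the identity $[x,y]=B(\pi(x),\pi(y))$ in more detail, which is fine.
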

\begin{proof} It is enough to note that a subgroup $H\leq G$ is abelian if and only if $[H,H]= \{e\}$, while $L \leq K$ is isotropic if and only if $B(L,L) = 0$. 
%\begin{displaymath}
%\Mt(xZ(G))(yZ(G))=[x,y]=0
%\end{displaymath}
%So the corrispondence is given by 
%\begin{eqnarray*}
%\left\lbrace M\leq G, \quad [M,M]=\{1\}, Z(G)\leq H \right\rbrace &\longrightarrow & \left\lbrace L\leq K, \quad \nabla(L)(L)=0 \right\rbrace\\
%M &\mapsto & \pi(M)\\
%\pi^{-1}(L)&\leftmapsto & L 
%\end{eqnarray*}
\end{proof}
Since maximality is preserved under this monotone correspondence, maximal isotropic subgroups of $K$ correspond to maximal abelian subgroups of $G$ containing $Z(G)$. 
	
\section{Generalized Heisenberg Groups}

In the class $\mathcal{N}_2$ there is a subclass of groups which is given by the following construction proposed in \cite{Me1}.

\begin{definition}\label{definition generalized H}
\label{heisenberg} Let $E,F,A$ be Abelian groups, $\omega$ a separated bilinear map from $E\times F$ to $A$. The group $\HG=(E\times F\times A,\cdot)$, defined by setting
\begin{equation*}
(x,y,z)\cdot (x',y',z')=(x+x',y+y',z+z'+\omega(x,y'))
\end{equation*}
for every $x,x'\in E, y,y'\in F$ and $z,z'\in A$ is called \textit{generalized Heisenberg group} and denoted by $\mathbb{H}(E,F,A,\omega)$ or simply by $\HG$.
%\footnote{}
\end{definition}

\begin{remark}\label{B and tilde for generalized}
\begin{itemize}
\item[(a)] Generalized Heisenberg groups are nilpotent groups of class 2. Indeed, $Z(\HG)=\lbrace 0\rbrace\times\lbrace 0	\rbrace\times A$ and $\HG/Z(\HG) \simeq E\times F$ is abelian. 

To compute the factorized commutator map $B$ we note that 
\begin{eqnarray*}
B((x,y),(x',y'))&=&M(x,y)(x',y') = [(x,y,0),(x',y',0)]\\
&=&\omega(x,y')-\omega(x',y).
\end{eqnarray*}
Therefore, 
\[B:(E\times F)\times (E\times F)\longrightarrow A, \quad ((x,y),(x',y'))\mapsto \omega(x,y')-\omega(x',y).\]
Hence, the commutator subgroup of $\HG$ can be computed as follows
$$
[\HG,\HG] = \{0\}\times\{0\}\times \langle \omega(x,y), \ (x,y)\in E\times F\rangle.
$$

\item[(b)] The following alternative description of $\HG$ is possible for the reader familiar with cohomology \cite{Mac}. 
 Let $\sigma: E\times F=\HG/Z(\HG) \to \HG$ be the obvious section defined by $\sigma(x,y) = (x,y,0)$ for $(x,y)\in E\times F$. Then, with respect to this section,  the group $\HG$, as a central extension of $A$, is given by the bilinear group cocycle obtained by setting
\begin{displaymath}
\gamma: (E\times F)\times (E\times F)\longrightarrow A,\quad  ((x,y),(x',y'))\mapsto \omega(x,y'),
\end{displaymath}
as $\gamma((x,y), (x',y')) = \sigma(x,y)\sigma(x',y')\sigma(x+x',y+y')^{-1} = (0,0,\omega(x,y'))$.
If $i :E\longrightarrow\HG$ and $j: F\longrightarrow\HG$ are the canonical injections, then
\[\omega:E\times F\longrightarrow A, \quad (x,y)\mapsto \omega(x,y)=[i(x),j(y)].\]
To make our exposition free of homological background, we are not going to pursue this line in the sequel (except in Remark \ref{last:Rem}). 
\end{itemize}
\end{remark}

\subsection{A characterization of generalized Heisenberg groups}
A generalized Heisenberg group $\HG$ can be characterized in terms of a specific property of  its subgroup lattice which is stated in the next lemma. 

\begin{lemma}\label{heisenberg as semidirect}
Let $G=\mathbb{H}(E,F,A,\omega)$ be a generalized Heisenberg group, where $E$ and $F$ are  identified with subgroups of $G$ in the obvious way. Then
\begin{itemize}
\item[(i)] $M_1=  Z(G)\times E $ and $M_2 = Z(G)\times F$ are maximal Abelian subgroups of $\HG$;
\item[(ii)] $G=M_1\cdot F=M_2\cdot E=E\cdot F\cdot Z(G)$;
\item[(iii)] $M_1\cap F=M_2\cap E=\lbrace 1 \rbrace$ and $M_1\cap M_2=Z(G)$. 
\end{itemize}
In particular, $M_1$ and $M_2$ are normal subgroups of $G$ and 
\begin{equation}\label{reticolo}
G\simeq M_1\rtimes  F\simeq M_2\rtimes  E. 
\end{equation}
\end{lemma}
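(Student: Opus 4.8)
The plan is to work entirely in the explicit coordinates, using the group law $(x,y,z)\cdot(x',y',z')=(x+x',y+y',z+z'+\omega(x,y'))$ together with the commutator formula $[(x,y,z),(x',y',z')]=(0,0,\omega(x,y')-\omega(x',y))$ recorded in Remark \ref{B and tilde for generalized}(a). Under the obvious identifications $E=\{(x,0,0)\}$, $F=\{(0,y,0)\}$ and $Z(G)=\{(0,0,z)\}$, one has $M_1=\{(x,0,z)\}$ and $M_2=\{(0,y,z)\}$. Almost every assertion then reduces to a one-line verification in these coordinates; the only step carrying genuine content is the maximality claim in (i), which is where the separatedness of $\omega$ is used.

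I would start with (i). A direct multiplication shows that $M_1$ and $M_2$ are subgroups on which the $\omega$-term vanishes (one of its arguments being $0$), so both are abelian. For maximality, suppose $M_1\le H$ with $H$ abelian and pick $g=(x_0,y_0,z_0)\in H$. Then $g$ commutes with every $(x,0,0)\in E\le M_1$, and by the commutator formula this commutator equals $(0,0,-\omega(x,y_0))$; hence $\omega(x,y_0)=0$ for all $x\in E$. Separatedness of $\omega$ (Definition \ref{LinMap}(a$_1$)) then forces $y_0=0$, i.e. $g\in M_1$, so $H=M_1$. The argument for $M_2$ is symmetric: commutation of $g$ with the elements $(0,y,0)\in F$ gives $\omega(x_0,y)=0$ for all $y\in F$, and the other half of the separatedness condition yields $x_0=0$.

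Parts (ii) and (iii) are pure bookkeeping. For (ii) I would compute the three products: $(x,0,z)(0,y,0)=(x,y,z+\omega(x,y))$ sweeps out all of $E\times F\times A$ as $x,y,z$ vary (for fixed $x,y$ the third coordinate is arbitrary), giving $G=M_1\cdot F$; similarly $(0,y,z)(x,0,0)=(x,y,z)$ gives $G=M_2\cdot E$; and $(x,0,0)(0,y,0)(0,0,z)$ attains every value, giving $G=E\cdot F\cdot Z(G)$. For (iii), intersecting the coordinate descriptions immediately yields $M_1\cap F=M_2\cap E=\{1\}$ and $M_1\cap M_2=Z(G)$.

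Finally, for normality and the semidirect decomposition I would argue as follows. Since $G\in\mathcal{N}_2$, we have $[G,G]\le Z(G)=\{0\}\times\{0\}\times A\le M_1\cap M_2$, and any subgroup $N$ containing $[G,G]$ is normal, because $g^{-1}ng=n[n,g]$ with $[n,g]\in[G,G]\le N$; hence $M_1$ and $M_2$ are normal in $G$. Combining the normality of $M_1$ with $M_1\cdot F=G$ and $M_1\cap F=\{1\}$ yields the internal semidirect product $G\simeq M_1\rtimes F$, and symmetrically $G\simeq M_2\rtimes E$. The only point requiring real care is the maximality step, as that is the sole place where the hypothesis on $\omega$ is invoked; everything else is formal manipulation of the coordinate description.
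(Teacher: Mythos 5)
Your proposal is correct and follows essentially the same route as the paper's (much terser) proof: maximality of $M_1$ and $M_2$ from the separatedness of $\omega$, parts (ii) and (iii) by direct computation from the definition of $\HG$, normality because $M_1,M_2\supseteq [G,G]$, and the semidirect decompositions from (ii) and (iii). You have simply supplied the coordinate verifications that the paper leaves to the reader, and they all check out.
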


Before the proof we summarize in the diagram below how the relevant subgroups from  (i)--(iii) are placed in the subgroups lattice of $\HG$: 

\begin{center}\label{structure of H}
$\xymatrix{
& G \ar@{-}[dr]   \\
 Z(G)\times E= M_1 \ar@{-}[ur] \ar@{-}[dr] \ar@{-}[d]& & M_2= Z(G)\times F \ar@{-}[dl] \ar@{-}[d]\\
E \ar@{-}[ddr]& Z(G)=M_1\cap M_2 \ar@{-}[d]& F \ar@{-}[ddl]\\
&[G,G]\ar@{-}[d]&\\
&\lbrace 1\rbrace& 
\\ 
}
$
\end{center} 

\begin{proof} The proof of (i) easily follows from the fact that the bilinear form $\omega$ is separated, while (ii) and (iii) follow from the definition of $\HG$. 

The subgroups $M_1$ and $M_2$ are normal as they contain the subgroup $[G,G]$. Now (\ref{reticolo}) follows from (ii) and (iii). 
\end{proof}

Now we show that the properties from the previous lemma completely characterize generalized Heisenberg groups. 

\begin{theorem}\label{caratt generalized} A group $G\in\mathcal{N}_2$ is a generalized Heisenberg group if and only if there exist  maximal Abelian subgroups  $M_1, M_2$ of $G$
% {and abelian groups $E$ and $F$} 
such that:
\begin{enumerate}
\item $M_1\cap M_2=Z(G)$;
\item $G=M_1 \cdot M_2$;
\item $Z(G)$ splits as a direct summand in $M_1$ and $M_2$.
\end{enumerate}
\end{theorem}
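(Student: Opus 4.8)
The statement is an equivalence, so I would prove the two implications separately; I expect the forward (necessity) direction to be essentially immediate from Lemma~\ref{heisenberg as semidirect}, while the converse carries all of the content.

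\emph{Necessity.} If $G=\mathbb{H}(E,F,A,\omega)$, then Lemma~\ref{heisenberg as semidirect} does almost all the work. Items (i) and (iii) give that $M_1=Z(G)\times E$ and $M_2=Z(G)\times F$ are maximal Abelian subgroups with $M_1\cap M_2=Z(G)$, which is condition~(1); condition~(2) follows from item~(ii) and the centrality of $Z(G)$, since $M_1M_2=Z(G)E\cdot Z(G)F=Z(G)EF=G$; and condition~(3) holds by construction, as $E$ and $F$ are the required direct complements of $Z(G)$ in $M_1$ and $M_2$.

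\emph{Sufficiency: setting up $\omega$.} Conversely, assume $M_1,M_2$ satisfy (1)--(3). I would set $A:=Z(G)$ and, using (3), fix direct decompositions $M_1=Z(G)\oplus E$ and $M_2=Z(G)\oplus F$; here $E,F$ are Abelian (being subgroups of the Abelian groups $M_1,M_2$) and $E\cap Z(G)=F\cap Z(G)=\{1\}$. Since $G\in\mathcal{N}_2$ we have $[G,G]\leq Z(G)=A$, so the assignment $\omega(x,y):=[x,y]$ defines a map $\omega\colon E\times F\to A$; its bilinearity is exactly the bilinearity of the factorized commutator map $B$ from Fact~\ref{charact of N_2}, restricted to $E\times F$. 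The delicate point here is \emph{separatedness}: if $x\in E$ satisfies $[x,y]=1$ for all $y\in F$, then $x$ centralizes $F$, centralizes $Z(G)$ trivially, and centralizes $E$ (as $E\leq M_1$ is Abelian), hence centralizes $M_1M_2=G$ by condition~(2); thus $x\in Z(G)\cap E=\{1\}$. The symmetric argument on $F$ shows $\omega$ is separated, so $\mathbb{H}(E,F,A,\omega)$ is defined.

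\emph{Sufficiency: building the isomorphism.} Writing $K=G/Z(G)$ and $\pi\colon G\to K$, conditions (1) and (2) give $K=\pi(M_1)\oplus\pi(M_2)$ with $\pi(M_1)\cong E$ and $\pi(M_2)\cong F$ (using $(M_1/Z(G))\cap(M_2/Z(G))=(M_1\cap M_2)/Z(G)=0$). This yields a \emph{unique factorization}: every $g\in G$ is uniquely $g=y\,x\,z$ with $x\in E$, $y\in F$, $z\in Z(G)$, which I would verify by projecting to $K=\pi(M_1)\oplus\pi(M_2)$ to recover $\pi(x),\pi(y)$ and hence $x,y,z$. I would then define $\phi\colon\mathbb{H}(E,F,A,\omega)\to G$ by $\phi(x,y,z)=y\,x\,z$; bijectivity is exactly the unique factorization. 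Finally $\phi$ is a homomorphism: computing $\phi(x,y,z)\,\phi(x',y',z')=y\,x\,z\,y'\,x'\,z'$ and pushing the central $z$ to the right and using the universal identity $a b=b a\,[a,b]$ together with the centrality of commutators in $\mathcal{N}_2$, the only surviving cross term is $[x,y']=\omega(x,y')$, so the product equals $\phi\big(x x',\,y y',\,z z'\,\omega(x,y')\big)$, matching Definition~\ref{definition generalized H}. The main obstacle is getting this last computation to reproduce the cocycle $\omega(x,y')$ with the correct factor and side, which is precisely why the ordering $y\,x\,z$ (rather than $x\,y\,z$) is chosen; the other genuinely nontrivial input is the separatedness argument, which is the only place where maximality of $M_1,M_2$ is really used.
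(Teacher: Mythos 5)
Your proof is correct and follows essentially the same route as the paper's: the same complements $E$, $F$ from condition (3), the commutator-defined $\omega(x,y)=[i(x),j(y)]$, and the same bijection $\phi(x,y,z)=j(y)i(x)z$ verified to be a homomorphism by the identical central-commutator computation. You actually supply more detail than the paper, which merely asserts separatedness and the unique factorization; the only quibble is that your separatedness argument really rests on condition (2) ($G=M_1\cdot M_2$) and the Abelianness of the $M_i$ rather than on their maximality, so your closing remark about where maximality is used is slightly off.
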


\begin{proof} The necessity was already established in Lemma \ref{heisenberg as semidirect}. 
 
To prove the sufficiency assume that $G$ is a group satisfying (1)--(3). 
Then exist subgroups $E$ and $F$ of $M_1$ and $M_2$ respectively, such that $M_1\simeq E\times Z(G)$ and $M_2\simeq F\times Z(G)$. Let $i:E\longrightarrow G$ and $j:F\longrightarrow G$ be the embeddings of $E$ and $F$ in $G$. The map given by: 
 \begin{displaymath}
\omega: E\times F\longrightarrow Z(G),\quad (x,y)\longrightarrow [i(x),j(y)]
\end{displaymath}
is a separated bilinear map, and the group $\HG=\mathbb{H}(E, F, Z(G),\omega)$ defined as in \ref{definition generalized H} is a generalized Heisenberg group. Note that every element $g\in G$ can be written in a unique way as $g=j(y)i(x)z$ for some $x\in E,y\in F$ and $z\in Z(G)$. So, the map $\phi: \HG\longrightarrow G$ defined by setting $\phi(x,y,z)= j(y)i(x)z$ for every $x\in E$, $y\in F$ and $z\in Z(G)$ is a bijection. Since $G\in\mathcal{N}_2$, all commutators are central elements, therefore we have
\begin{eqnarray*}
\phi((x,y,z)(x',y',z'))&=&\phi(x+x',y+y',z+z'+\omega(x,y'))=j(y+y')i(x+x') z z'[i(x),j(y')]\\
&=&j(y)j(y')i(x)[i(x),j(y')]i(x') z z'=\\
&=& j(y)i(x) z j(y')i(x') z'=\\
&=&\phi(x,y,z)\phi(x',y',z')
\end{eqnarray*} 
for every $x,x'\in E, y,y'\in F$ and $z,z' \in Z(G)$. This proves that $\phi$ is a group isomorphism.
\end{proof}

%======================================================================

\subsection{Standard Heisenberg group}
Let us introduce now a family of specific generalized Heisenberg groups.
\begin{lemma}\label{omega_E injective}
Let $E$, $A$ be Abelian groups. The bilinear map:
\begin{equation}\label{NEEW}
\omega: E\times Hom(E,A)\longrightarrow A,\quad (x,g)\mapsto g(x)
\end{equation}
is separated if and only if $\omega_E$ is injective. In particular, $\mathbb{H}(E,Hom(E,A),\omega)$ is a generalized Heisenberg group if and only if $\omega_E$ is injective.
\end{lemma}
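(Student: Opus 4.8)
The plan is to reduce the ``separated'' condition to injectivity statements about the two auxiliary maps $\omega_E$ and $\omega_F$ associated to $\omega$ in Remark~\ref{omegaE}, and then to observe that in this particular situation one of these two injectivity conditions holds automatically. Recall that, by Remark~\ref{omegaE}(b), the bilinear map $\omega$ is separated precisely when both $\omega_E\colon E \to A^F$ and $\omega_F\colon F \to A^E$ are injective, where here $F = Hom(E,A)$. Thus it suffices to show that $\omega_F$ is always injective in this case, so that separatedness collapses to injectivity of $\omega_E$ alone.

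First I would unwind the definition of $\omega_F$ in the case at hand. For $g \in Hom(E,A)$ we have $\omega_F(g)(x) = \omega(x,g) = g(x)$ for every $x \in E$; that is, $\omega_F(g)$ is nothing but $g$ itself, viewed as an element of $A^E$. Hence $\omega_F$ is simply the canonical inclusion $Hom(E,A) \hookrightarrow A^E$, which is manifestly injective. Concretely, the second half of the separatedness condition---that every $0 \neq g \in Hom(E,A)$ admits some $x \in E$ with $\omega(x,g) = g(x) \neq 0$---holds trivially, since a nonzero homomorphism is nonzero at some point. Combining this observation with Remark~\ref{omegaE}(b) yields the desired equivalence: $\omega$ is separated if and only if $\omega_E$ is injective.

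For the ``in particular'' clause I would invoke Definition~\ref{definition generalized H}: the group $\mathbb{H}(E,Hom(E,A),\omega)$ is, by definition, a generalized Heisenberg group exactly when $\omega$ is a separated bilinear map. Bilinearity of the evaluation map is immediate, so the claim follows at once from the first part. There is essentially no real obstacle here; the only point deserving attention is the identification of $\omega_F$ with the tautological inclusion $Hom(E,A) \hookrightarrow A^E$, which makes transparent the asymmetry between the roles played by $E$ and $F = Hom(E,A)$ and explains why only the injectivity of $\omega_E$ survives as a genuine hypothesis.
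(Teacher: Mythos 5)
Your argument is correct and is essentially the paper's own proof: both reduce separatedness to the injectivity of $\omega_E$ and $\omega_F$ via Remark~\ref{omegaE}(b) and then observe that $\omega_F=\omega_{Hom(E,A)}$ is the tautological (identity) inclusion of $Hom(E,A)$ into $A^E$, hence automatically injective. Your write-up just spells out this identification a bit more explicitly than the paper does.
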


\begin{proof} It follows from Remark \ref{omegaE}, since $\omega_{Hom(E,A)}=id_{Hom(E,A)}$ is injective. \end{proof}

\begin{definition}\label{standard H} Let $E$, $A$ be Abelian groups such that the bilinear map
\begin{displaymath}
\omega: E\times Hom(E,A)\longrightarrow A,\quad (x,f)\mapsto f(x)
\end{displaymath} is separated. Then $\HG$ is called {\em standard Heisenberg group} and denoted by $\mathbb{H}(E,A)$.
\end{definition}

This family is very relevant since any generalized Heisenberg group can be embedded in a standard one.

\begin{lemma}\label{gen embed in standard}
Every generalized Heisenberg group embeds into a standard Heisenberg group. 
\end{lemma}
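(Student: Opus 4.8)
The plan is to show that an arbitrary generalized Heisenberg group $G=\mathbb{H}(E,F,A,\omega)$ embeds into a standard Heisenberg group, and the natural target to aim for is $\mathbb{H}(E,A)=\mathbb{H}(E,\mathrm{Hom}(E,A),\omega')$, where $\omega'(x,f)=f(x)$ is the evaluation pairing. The whole point of the standard group is that the second coordinate is forced to be a \emph{full} Hom-group, so the strategy is to replace the given group $F$ by its canonical image inside $\mathrm{Hom}(E,A)$ via the map $\omega_F$ from Remark \ref{omegaE}, which sends $y\in F$ to the homomorphism $\omega_F(y)=\omega(\cdot,y):E\to A$.

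First I would record that, since $\omega$ is separated (Definition \ref{definition generalized H} requires this), Remark \ref{omegaE}(b) guarantees that $\omega_F:F\to\mathrm{Hom}(E,A)$ is injective, and $\omega_E:E\to\mathrm{Hom}(F,A)$ is injective as well. Next I would check that the evaluation pairing $\omega':E\times\mathrm{Hom}(E,A)\to A$ is itself separated, so that $\mathbb{H}(E,A)$ is genuinely a standard Heisenberg group in the sense of Definition \ref{standard H}: by Lemma \ref{omega_E injective} this amounts exactly to the injectivity of $\omega'_E$, which follows because the image $\omega_F(F)\subseteq\mathrm{Hom}(E,A)$ already separates the points of $E$ (this is precisely the first half of the separatedness of $\omega$, transported through $\omega_F$), and \emph{a fortiori} the full group $\mathrm{Hom}(E,A)$ separates the points of $E$. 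Then I would define the candidate embedding
\[
\Phi:\HG=\mathbb{H}(E,F,A,\omega)\longrightarrow\mathbb{H}(E,A)=\mathbb{H}(E,\mathrm{Hom}(E,A),\omega'),\qquad \Phi(x,y,z)=(x,\omega_F(y),z),
\]
i.e. keep the first and third coordinates and push $y$ forward to the homomorphism it induces.

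The key computation is that $\Phi$ is a group homomorphism. This reduces to the compatibility of the two pairings under $\omega_F$, namely the identity
\[
\omega'(x,\omega_F(y'))=\omega_F(y')(x)=\omega(x,y'),
\]
which holds literally by the definitions of $\omega'$ and of $\omega_F$. Feeding this into the multiplication rule of Definition \ref{definition generalized H} shows that the twisting cocycle $\omega(x,y')$ in the source matches the twisting cocycle $\omega'(x,\omega_F(y'))$ in the target, so $\Phi$ respects products; that $\Phi$ is injective follows since it is injective in the first and third coordinates outright, and injective in the second coordinate precisely because $\omega_F$ is injective.

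The only genuine subtlety, and the step I expect to require the most care, is verifying that the target really \emph{is} a standard Heisenberg group rather than merely a generalized one, i.e. that the evaluation pairing on $E\times\mathrm{Hom}(E,A)$ is separated. Injectivity of $\omega'_E$ (equivalently, that $\mathrm{Hom}(E,A)$ separates points of $E$) is the delicate direction and is supplied by the separatedness of the original $\omega$ through $\omega_F$; the reverse direction, that $E$ separates points of $\mathrm{Hom}(E,A)$, is automatic since $\omega'_{\mathrm{Hom}(E,A)}=\mathrm{id}$ is injective, exactly as in the proof of Lemma \ref{omega_E injective}. Once separatedness of $\omega'$ is in hand, Lemma \ref{omega_E injective} certifies that $\mathbb{H}(E,A)$ is a bona fide standard Heisenberg group, and the homomorphism $\Phi$ constructed above completes the embedding. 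In the topological setting one would additionally remark that $\Phi$ is continuous and a topological embedding, since it is the identity on the $E$- and $A$-coordinates and $\omega_F$ is continuous whenever $\omega$ is.
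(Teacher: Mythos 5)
Your proof is correct and follows essentially the same route as the paper: the embedding you call $\Phi$ is exactly the paper's map $\iota_E=\mathrm{id}_E\times\omega_F\times\mathrm{id}_A$, with injectivity coming from Remark \ref{omegaE}(b). The extra checks you carry out (that the evaluation pairing on $E\times\mathrm{Hom}(E,A)$ is separated, and the explicit cocycle compatibility) are correct and merely make explicit what the paper leaves implicit.
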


\begin{proof} Let $G=\mathbb{H}(E,F,A,\omega)$ be a generalized Heisenberg group. By Remark \ref{omegaE}(b), $\omega_F$ is an embedding of $F$ in $Hom(E,A)$. Therefore, the map 
\begin{equation}\label{JE}
\iota_E= id_E\times \omega_F\times id_A: \HG\longrightarrow \mathbb{H}(E,A)
\end{equation}
is an injective homomorphism providing the required embedding in $\mathbb{H}(E,A)$. Similarly, one can embed $G$ in $\mathbb{H}(F,A)$ using the flip $\tau: Hom(F,A)\times F\longrightarrow F\times Hom(F,A)$ defined by $(f,x)\mapsto (x,f)$, for $f\in Hom(F,A)$ and $x\in F$. The map
\begin{equation}\label{JF}
\iota_F:=(\tau\times id_A)\circ(\omega_E\times id_F\times id_A):\HG\longrightarrow \mathbb{H}(F,A)
\end{equation}
is an injective homomorphism providing the required embedding.
\end{proof}

%================================================================

%\newpage 
\section{Topological Heisenberg groups}
In this section we show that the Mumford map has additional nice properties in the case of topological groups. 
We endow generalized Heisenberg groups with a topology making them topological groups and study 
their properties. In particular, we characterize locally compact generalized Heisenberg group with center isomorphic to $\mathbb{T}$.

\begin{definition}
Let $G,H$ be topological groups. Then $Chom(G,H)$ is the group of the continuous homomorphism from $G$ to $H$ endowed with the compact-open topology. If $H=\mathbb{T}$ we denote $Chom(G,\mathbb{T})$ simply by $\widehat{G}$.
\end{definition}

%\textcolor{blue}

First we adapt the Mumford map for the framework of topological groups.

\begin{remark}\label{topological commutator}
Let $G\in \mathcal{N}_2$ be a topological group and $\pi: G\longrightarrow G/Z(G)$ be the canonical map.

(a) As the commutator map is continuous, for every open neighborhood of the identity $U\subseteq G$, there exist $V,W$ open neighborhood of the identity such that $[V,W]\subseteq U$, so $[\pi(V),\pi(W)]=[V,W]\subseteq U$. Since $\pi$ is open, therefore the factorized commutator map $B$ is continuous

(b) It follows from item (a) that the Mumford map $M:K\longrightarrow Hom(K,Z(G))$ takes values in the subgroup $ Chom(K,Z(G))$ of $Hom(K,Z(G))$ consisting of  continuous homomorphisms from $K$ to $Z(G)$, where $K=G/Z(G)$ carries the quotient topology and $Z(G)$ is endowed with the induced by $G$ topology.
\end{remark}

In order to emphasize the fact described in item (b) of the above remark, we adapt the definition of the Mumford map as follows:

\begin{definition} Let $G\in \mathcal{N_2}$ be a topological group and $K=G/Z(G)$. We use the notation $\Mt$ for the corestriction
\begin{displaymath}
\Mt:K\longrightarrow Chom(K,Z(G)),
\end{displaymath}
defined as in formula \eqref{mumford map formula}, and we call it {\it topological Mumford map}.
\end{definition}

Now we see that the topological Mumford map $\Mt$ is a continuous injective homomorphism whenever $G\in \mathcal{N}_2$ is a topological group.
%and $Chom(K,Z(G))$ carries the compact open topology.
%
%We see now that the Mumford map makes sense also in the framework of topological groups.
%\begin{remark}\label{topological commutator}
%%\textcolor{blue}
%Let $G\in \mathcal{N}_2$ be a topological group and $\pi: G\longrightarrow G/Z(G)$ be the canonical map. 
%
%(a) As the commutator map is continuous, for every open neighborhood of the identity $U\subseteq G$, there exist $V,W$ open neighborhood of the identity such that $[V,W]\subseteq U$, so $[\pi(V),\pi(W)]=[V,W]\subseteq U$. Since $\pi$ is open, therefore the factorized commutator map $B$ is continuous
%
%(b) It follows from item (a) that the Mumford map $\Mt:K\longrightarrow Chom(K,Z(G))$ takes values in the group $ Chom(K,Z(G))$ of continuous homomorphisms from $G/Z(G)$ to $Z(G)$,
%where $K$ carries the quotient topology and $Z(G)$ is endowed with the induced topology. 
%\end{remark} 
%
%In order to emphasize the fact described in item (b) of the above remark, we isolate it as follows: 
%
%\begin{definition} Let $G\in \mathcal{N_2}$ be a topological group and $K=G/Z(G)$. We shall keep the notation $\Mt$ for the corestriction
%\begin{displaymath}
%\Mt:K\longrightarrow Chom(K,Z(G)),
%\end{displaymath}  
%defined as in formula \eqref{mumford map formula}, and when necessary to underling the difference we called it {\it topological Mumford map}. 
%\end{definition}
%
%Now we see that the Mumford map $\Mt$ is a continuous injective homomorphism whenever $G\in \mathcal{N}_2$ is a topological group
%and $Chom(K,Z(G))$ carries the compact open topology. 

\begin{lemma}\label{Btilde is injective and continuous}
Let $G\in \mathcal{N}_2$ be a topological group and $K=G/Z(G)$. Then the topological Mumford map $\Mt$ is a continuous injective homomorphism.  
\end{lemma}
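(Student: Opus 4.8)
The plan is to establish the three required properties of $\Mt$ separately, leveraging the algebraic facts already proved together with the topological observation in Remark \ref{topological commutator}. The statement that $\Mt$ is a homomorphism and that it is injective is essentially content we already have in hand: by Lemma \ref{commutator map}, the (algebraic) Mumford map $M : K \longrightarrow Hom(K, Z(G))$ is an injective group homomorphism. By Remark \ref{topological commutator}(b), when $G$ carries a group topology the image of $M$ lands inside the subgroup $Chom(K, Z(G))$ of continuous homomorphisms, so the corestriction $\Mt : K \longrightarrow Chom(K, Z(G))$ is well defined and inherits from $M$ both the homomorphism property and injectivity. Thus the only genuinely new point to verify is continuity of $\Mt$ with respect to the compact-open topology on the target.

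For continuity, I would argue directly from the definition of the compact-open topology. A subbasic open set of $Chom(K, Z(G))$ has the form $W(C, U) = \{\, f : f(C) \subseteq U \,\}$ where $C \subseteq K$ is compact and $U \subseteq Z(G)$ is an open neighborhood of the identity. To show $\Mt$ is continuous it suffices to check continuity at the identity of $K$, since $\Mt$ is a homomorphism into a topological group; so I must produce, for each such $C$ and $U$, an open neighborhood $O$ of the identity in $K$ with $\Mt(O) \subseteq W(C, U)$, i.e. $B(o, c) \in U$ for all $o \in O$ and $c \in C$. The key input is the joint continuity of the factorized commutator map $B : K \times K \longrightarrow Z(G)$ established in Remark \ref{topological commutator}(a): given $U$, for each $c \in C$ continuity of $B$ at $(0, c)$ yields neighborhoods $O_c$ of $0$ and $N_c$ of $c$ with $B(O_c, N_c) \subseteq U$. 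Covering the compact set $C$ by finitely many $N_{c_1}, \dots, N_{c_n}$ and setting $O = \bigcap_{i=1}^n O_{c_i}$ gives the desired neighborhood, since every $c \in C$ lies in some $N_{c_i}$ and then $B(O, c) \subseteq B(O_{c_i}, N_{c_i}) \subseteq U$.

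The step I expect to be the main (though still mild) obstacle is precisely this compactness argument: one must be careful that the compact-open topology is controlled by subbasic sets over compact sets, and that the standard covering trick applies. Everything else reduces to bookkeeping, invoking the already-proved algebraic injectivity and homomorphism properties and the joint continuity of $B$. I would therefore structure the write-up as: first cite Lemma \ref{commutator map} and Remark \ref{topological commutator}(b) to dispose of the homomorphism and injectivity claims; then carry out the neighborhood-covering argument above to conclude continuity at the identity, and hence everywhere.
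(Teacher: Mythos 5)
Your proposal is correct and follows essentially the same route as the paper: injectivity and the homomorphism property are inherited from the algebraic Mumford map via Lemma \ref{commutator map} and Remark \ref{topological commutator}, and continuity at the identity is proved by the same compactness argument (cover the compact set $C$ by finitely many neighborhoods on which $B$ is controlled, then intersect the corresponding neighborhoods of $0$). The only cosmetic difference is that the paper takes the neighborhoods of $c\in C$ in the specific form $c+V_c$ rather than arbitrary $N_c$, which changes nothing.
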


\begin{proof} Injectivity follows from Lemma \ref{commutator map}. An open neighborhood of the identity of $Chom(K,Z(G))$ is given by 
\begin{displaymath}
W_{L,U}=\lbrace \chi: K\longrightarrow Z(G),\quad \chi (L)\subset U\rbrace
\end{displaymath} 
for some compact $L\subseteq K$ and some open neighborhood of the identity $U\subseteq Z(G)$.\\ 
The map $B$ is continuous by Remark \ref{topological commutator}, so for every $y\in L$ there exists a open neighborhood $V_y$ 
of 0 such that and $[V_y,y+V_y]\subseteq U$. The family $\lbrace y+V_y, y\in L\rbrace$ is a open cover of $L$, so there exists a finite subcover $\lbrace y_i+V_{y_i}, 1\leq i\leq r\rbrace$. Then $V=\bigcap_{i=1}^r V_{y_i}$ is open and $\Mt(V)\subseteq W_{L,U}$. Therefore, $\Mt$ is continuous. 
 \end{proof}
 
 In case $A, E$ and $F$ are topological groups, one can provide  $(\HG,\gamma)$ with the product
 topology $\gamma$ which makes it a topological group provided $\omega$ is continuous. 
Actually, one has the following more precise result: 

\begin{proposition}\label{Prop:Misha}{\cite{Me1}} 
Let $A,E,F$ be Abelian groups and $\nu,\sigma,\tau$ be topologies on $A, E$ and $F$, respectively. 
Then the product topology $\gamma$  on $E\times F\times A$ is a group topology of $(\HG,\gamma)$ if and only if $(A,\nu),(E,\sigma),(F,\tau)$ are topological groups and $\omega$ is continuous.
\end{proposition}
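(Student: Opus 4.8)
The plan is to reduce the biconditional, in both directions, to the continuity of the two structure maps of the group, namely the multiplication $m\colon\HG\times\HG\to\HG$ and the inversion $\iota_{\mathrm{inv}}\colon\HG\to\HG$. First I would record the explicit formulas: by Definition \ref{definition generalized H} the multiplication is $(x,y,z)\cdot(x',y',z')=(x+x',y+y',z+z'+\omega(x,y'))$, and a short computation using bilinearity of $\omega$ (in particular $\omega(x,0)=0=\omega(0,y)$) gives the inverse $(x,y,z)^{-1}=(-x,-y,-z+\omega(x,y))$. Since $\gamma$ is the product topology, continuity of a map into $E\times F\times A$ can always be tested coordinatewise against the three continuous projections, which is the mechanism I will use throughout.

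For the sufficiency, assume $(A,\nu),(E,\sigma),(F,\tau)$ are topological groups and $\omega$ is continuous. The first two coordinates of $m$ are $x+x'$ and $y+y'$, continuous because addition in $(E,\sigma)$ and $(F,\tau)$ is continuous; the third coordinate $z+z'+\omega(x,y')$ is a composition of the continuous addition of $(A,\nu)$ with the continuous map $\omega$, hence continuous. Applying the same coordinatewise analysis to the inversion formula, using continuity of negation in the three groups together with continuity of $\omega$, shows $\iota_{\mathrm{inv}}$ is continuous. Therefore $(\HG,\gamma)$ is a topological group.

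For the necessity, assume $\gamma$ is a group topology. I would identify $E$, $F$ and $A=Z(\HG)$ with the subgroups $E\times\{0\}\times\{0\}$, $\{0\}\times F\times\{0\}$ and $\{0\}\times\{0\}\times A$ of $\HG$ (these are subgroups by bilinearity of $\omega$). The obvious inclusion of each one into $\HG$ is continuous and admits the corresponding coordinate projection as a continuous retraction, so it is a topological embedding; hence the subspace topology induced on the three subgroups is exactly $\sigma$, $\tau$, $\nu$. As subgroups of the topological group $(\HG,\gamma)$ carry a group topology in the subspace topology, it follows that $(E,\sigma),(F,\tau),(A,\nu)$ are topological groups. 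To extract $\omega$ from the group structure I use the identity $(x,0,0)\cdot(0,y,0)=(x,y,\omega(x,y))$, which shows that $\omega$ factors as $\omega=p_A\circ m\circ\jmath$, where $\jmath\colon E\times F\to\HG\times\HG$, $(x,y)\mapsto((x,0,0),(0,y,0))$, is continuous (being built from the two continuous inclusions) and $p_A\colon\HG\to A$ is the continuous projection onto the third coordinate. Thus $\omega$ is continuous.

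The individual computations are routine; the only step demanding genuine care is the necessity direction. There one must verify that the subspace topologies on the three coordinate subgroups really are the prescribed $\sigma,\tau,\nu$ (so that the ``subgroup of a topological group'' argument transfers the topological-group property to the intended objects rather than to some quotient), and one must isolate $\omega$ cleanly from the twisted multiplication law, exploiting $(x,0,0)\cdot(0,y,0)=(x,y,\omega(x,y))$ rather than a commutator, so that $\omega$ itself, and not merely $\omega(x,y')-\omega(x',y)$, is recovered as a composite of continuous maps.
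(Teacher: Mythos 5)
Your proof is correct. The paper itself gives no proof of this proposition, citing it from Megrelishvili's paper \cite{Me1}, and your argument is the standard one: coordinatewise continuity of multiplication and inversion for sufficiency, and for necessity the identification of $(E,\sigma)$, $(F,\tau)$, $(A,\nu)$ with the coordinate subgroups (whose subspace topologies in the product are the given ones) together with the clean recovery of $\omega$ from $(x,0,0)\cdot(0,y,0)=(x,y,\omega(x,y))$.
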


\begin{definition}
Let $E,A$ be topological Abelian groups and $Chom(E,A)$ be the group of continuous homomorphism from $E$ to $A$ endowed with the compact open topology. If the map
\begin{equation}\label{ev map}
\omega:E\times Chom(E,A)\to A, \ (x,f)\mapsto f(x)
\end{equation}
is separated and continuous, then $\mathbb{H}(E,Chom(E,A),A,\omega)$ endowed with the product topology is a topological group that will be called
\emph{topological standard Heisenberg group} and denoted by $\mathbb{H}(E,A)$.
\end{definition}

\begin{example}\label{Mackey}
(a) Let $E$ be a LCA group. The map
\begin{equation}\label{NEEEW}
\omega: E\times \widehat{E}\longrightarrow \mathbb{T},\hspace{1 cm} (x,f)\mapsto f(x)
\end{equation} 
is a continuous separated bilinear map, so $G=\mathbb{H}(E,\widehat{E},\omega)$ is a topological group. As mentioned in the introduction, 
it will be denoted by $\mathbb{H}(E)$ and called {\em Mackey-Weil group}. The isomorphisms $\mathbb{H}(E)\simeq (E\times \mathbb{T})\rtimes \widehat{E}$,  
$\mathbb{H}(\widehat{E})\simeq (\widehat{\widehat{E}} \times \mathbb{T})\rtimes \widehat{E}$ given by Lemma \ref{heisenberg as semidirect}, and $E\simeq \widehat{\widehat{E}}$ yield $\mathbb{H}(E)\simeq \mathbb{H}(\widehat{E})$.\end{example}

(b) If $E$ is a topological abelian group, then (\ref{NEEEW}) need not be continuous. An example to this effect is the group $E = \mathbb Z^{\#}$, namely, the group $\mathbb Z$ equipped with its Bohr topology. 
Then $ \widehat{E} = \mathbb T$ equipped with the usual compact topology of $\mathbb T$. 
%\begin{proof}   \end{proof}
 
\begin{corollary}\label{evaluetion are continuous} Let $\HG=\mathbb{H}(E,F,A,\omega)$ be a topological generalized Heisenberg group. Then
\begin{equation}\label{Laaast:Eq}
\omega_E: E \longrightarrow  Chom(F,A)\quad \text{and} \quad \omega_F: F \longrightarrow  Chom(E,A)
\end{equation}
are continuous injective homomorphisms.
\end{corollary}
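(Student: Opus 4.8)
The plan is to derive the corollary from the already-established continuity of the topological Mumford map (Lemma \ref{Btilde is injective and continuous}), by exhibiting $\omega_E$ and $\omega_F$ as composites of $\Mt$ with elementary continuous maps. The algebraic content is immediate: since $\HG=\mathbb{H}(E,F,A,\omega)$ is a generalized Heisenberg group, $\omega$ is a separated bilinear map (Definition \ref{definition generalized H}), so by Remark \ref{omegaE}(b) the maps $\omega_E$ and $\omega_F$ are injective group homomorphisms. By Proposition \ref{Prop:Misha} the map $\omega$ is continuous, hence for fixed $x$ the partial map $y\mapsto \omega(x,y)$ is continuous and for fixed $y$ the partial map $x\mapsto\omega(x,y)$ is continuous; thus $\omega_E$ takes values in $Chom(F,A)$ and $\omega_F$ in $Chom(E,A)$. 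It remains only to prove continuity into the compact-open topologies.

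For this I would apply Lemma \ref{Btilde is injective and continuous} to $G=\HG$. Here $Z(\HG)\cong A$ and $K=\HG/Z(\HG)\cong E\times F$; since the canonical projection $\HG=E\times F\times A\to E\times F$ (forgetting the last coordinate) is open and continuous, the quotient topology on $K$ agrees with the product topology on $E\times F$, so this identification is topological. Lemma \ref{Btilde is injective and continuous} then gives that $\Mt:E\times F\longrightarrow Chom(E\times F,A)$ is continuous, with $\Mt(x,y)(x',y')=B((x,y),(x',y'))=\omega(x,y')-\omega(x',y)$ by Remark \ref{B and tilde for generalized}(a). Let $\iota_E:E\to E\times F$ and $\iota_F:F\to E\times F$ be the coordinate embeddings, and let $\rho_E:Chom(E\times F,A)\to Chom(E,A)$ and $\rho_F:Chom(E\times F,A)\to Chom(F,A)$ be the restriction maps (precomposition with $\iota_E$, resp. $\iota_F$). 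These restrictions are continuous, because the preimage of a subbasic neighborhood $W_{L,U}$ (with $L$ compact and $U$ an open neighborhood of $0$) is again a subbasic neighborhood $W_{\iota_E(L),U}$, resp. $W_{\iota_F(L),U}$, and $\iota_E(L)$, $\iota_F(L)$ are compact. A direct evaluation using the formula for $B$ gives $\rho_F(\Mt(\iota_E(x)))(y)=B((x,0),(0,y))=\omega(x,y)=\omega_E(x)(y)$ and $\rho_E(\Mt(\iota_F(y)))(x)=B((0,y),(x,0))=-\omega(x,y)=-\omega_F(y)(x)$. Hence $\omega_E=\rho_F\circ\Mt\circ\iota_E$ and $-\omega_F=\rho_E\circ\Mt\circ\iota_F$; since all constituents and the inversion of $Chom(E,A)$ are continuous, both $\omega_E$ and $\omega_F$ are continuous.

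The main obstacle is purely bookkeeping: identifying the quotient topology on $K$ with the product topology on $E\times F$, and verifying continuity of the restriction maps in the compact-open topology. The sign discrepancy appearing for $\omega_F$ is only a minor subtlety, absorbed by the continuity of inversion. As an alternative to the reduction above, one may argue directly in the style of Lemma \ref{Btilde is injective and continuous}: given a subbasic neighborhood $W_{L,U}$ of the identity in $Chom(F,A)$, use continuity of $\omega$ together with $\omega(0,\cdot)=0$ to cover the compact set $L$ by finitely many open sets $N_{y_i}$ on which $\omega(V_{y_i},N_{y_i})\subseteq U$, and then set $V=\bigcap_i V_{y_i}$ to obtain $\omega_E(V)\subseteq W_{L,U}$; the argument for $\omega_F$ is symmetric.
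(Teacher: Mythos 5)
Your proposal is correct and takes essentially the same route as the paper's own proof, which likewise deduces the statement from the continuity of $\Mt$ (Lemma \ref{Btilde is injective and continuous}) applied to $G=\HG$ together with the identification $Chom(E\times F,A)\simeq Chom(E,A)\times Chom(F,A)$ and the formula $\Mt(x,y)(z,u)=\omega_E(x)(u)-\omega_F(y)(z)$. You merely spell out details the paper leaves implicit (the quotient topology on $\HG/Z(\HG)$ agreeing with the product topology on $E\times F$, the continuity of the restriction maps, and the sign absorbed by continuity of inversion), so no further changes are needed.
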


\begin{proof} The map $\Mt:E\times F\longrightarrow Chom(E\times F,A)\simeq Chom(E,A)\times Chom(F,A)$ is given by 
\begin{displaymath}
\Mt(x,y)(z,u)=\omega_E(x)(u)-\omega_F(y)(z)
\end{displaymath}
for every $x,z\in E$ and $y,u\in F$ and it is a continuous injective group homomorphism by Lemma \ref{Btilde is injective and continuous}. Therefore,
the maps in (\ref{Laaast:Eq}) are continuous injective group homomorphisms. 
%
%The maps $\omega_E$ and $\omega_F$ are injective since the map $\omega$ is separated. Since $\omega$ is continuous, then $\omega_E(x)$ is continuous for every $x\in E$ and $\omega_F(y)$ is continuous for every $y\in F$.\\
%A open neighborhood of the identity of $Chom(F,A)$ is given by 
%\begin{displaymath}
%W_{L,U}=\lbrace \chi: F\longrightarrow A,\quad \chi (L)\subset U\rbrace
%\end{displaymath} 
%for some compact $L\subset F$ and some open neighborhood of the identity $U\subseteq A$.\\ 
%Note that $\omega_E(x)(y)=[i_E(x),i_F(y)]$, where $i_E$ and $i_F$ are the canonical topological embeddings. \\
%The map $[,]$ is continuous in every topological group, so for every $y\in L$ there exists $V_y\subset F$ and $W_y\subset E$ open neighborhoods of the identity such that and $[i_E(W_y),i_F(y)+i_F(V_y)]\subseteq U$. The family $\lbrace i_F(y)+i_F(V_y), \ y\in L\rbrace$ is a open cover of $i_F(L)$ and then there exists a finite subcover $\lbrace i_F(y_i)+i_F(V_{y_i}), \ 1\leq i\leq r\rbrace$ and $V=\bigcap_{i=1}^r i_F(V_{y_i})$ is open. Since $\omega_E(V)\subseteq W_{L,U}$, then $\omega_E$ is continuous. \\
%The same argument applies to $\omega_F$.
 \end{proof}

\begin{remark}  
For the standard Heisenberg group $G=\mathbb{H}(E,A)$, the following conditions are equivalent:  
\begin{itemize}

\item[(a)] $G$ with the product topology is a topological group; 

\item[(a$'$)] $\omega: E\times Chom(E,A)\longrightarrow A$ is continuous;

\end{itemize}
For abelian topological groups $E$ and $A$ and $F = Chom(E,A)$ endowed with the compact-open topology, the evaluation map $\omega_{E}: E \to Chom(Chom(E,A),A)$ is continuous if and only if the compact subsets of $Chom(E,A)$ are equicontinuous (this result is well-known for $A=\mathbb{T}$).
By Corollary \ref{evaluetion are continuous}, condition (a) implies the following equivalent conditions:
\begin{itemize}
\item[(b)] (\ref{Laaast:Eq}) are continuous; 

\item[(c)] the evaluation map $\omega_{E}$ is continuous;

\item[(d)] the compact subsets of $Chom(E,A)$ are equicontinuous. 
\end{itemize}
So conditions (b)-(c)-(d) are necessary for $G$ to be a topological group. 
\end{remark}

 The next example shows that none of the continuous injections $\omega_E$ and $\omega_F$ in Corollary \ref{evaluetion are continuous} are not topological embeddings in general.

\begin{example} Let $E$ and $F$ be locally compact abelian groups such that there exists a dense injective non-surjective continuous homomorphism $\iota: E \to \widehat F$. Then the dual map $\widehat \iota$ composed with the isomorphism $F\to \widehat{\widehat F}$ provides a dense injective non-surjective  continuous homomorphism $\iota': F \to \widehat E$. Clearly, none of $\iota, \iota'$ is a topological embedding as locally compact groups are complete. 

Define $\omega: E \times F\to \mathbb{T}$ by $\omega(x,y) =\iota(x)(y)$ for all $x,y\in E \times F$.  Then, for the topological generalized Heisenberg group $\mathbb{H}(E,F, \mathbb{T},\omega)$ one can identify $\omega_E$ with $\iota$ and $\omega_F$ with $\iota'$. Hence, both $\omega_E$ and $\omega_F$ fail to be topological embeddings.  Note that the topological generalized Heisenberg group $\mathbb{H}(\widehat E, \widehat F, \mathbb{T},\omega)$ has the same property, although it need not be isomorphic to $\mathbb{H}(E,F, \mathbb{T},\omega)$. 

For an example of pair $E,F$ with these properties take the discrete group $E = \mathbb{Q}$, $F = \mathbb{R}$ and $\iota: \mathbb{Q} \to \mathbb{R} =E$ the inclusion map. Note the groups $\mathbb{H}(E, F, \mathbb{T},\omega)$ and $\mathbb{H}(\widehat E, \widehat F, \mathbb{T},\omega)$ are not even homeomorphic, as the latter group is connected, while the former one is not.
\end{example}

The next Theorem offers a characterization of the locally compact generalized Heisenberg groups with $Z(G) = \mathbb{T}$ in the class $\mathcal N_2$.  For the proof we need the following well known fact: 
 
 \begin{fact}{\cite[6.16]{A}}\label{T is direct summand}
Let $G$ be a locally compact abelian group and $\mathbb{T}\leq G$. Then $\mathbb{T}$ is a direct summand of $G$.\end{fact}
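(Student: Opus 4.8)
The plan is to pass to Pontryagin duals, where the problem becomes the splitting of a surjection onto the free group $\mathbb{Z}$, and then to dualize back. Since $\mathbb{T}$ is compact, it is closed in the Hausdorff group $G$, so the inclusion $\iota\colon \mathbb{T}\hookrightarrow G$ fits into a short exact sequence of LCA groups
\[
0\longrightarrow \mathbb{T}\stackrel{\iota}{\longrightarrow} G\longrightarrow G/\mathbb{T}\longrightarrow 0.
\]
Pontryagin duality is an exact contravariant functor on the category of LCA groups and carries $\mathbb{T}$ to $\widehat{\mathbb{T}}\cong\mathbb{Z}$, so applying it produces a short exact sequence
\[
0\longrightarrow \widehat{G/\mathbb{T}}\longrightarrow \widehat{G}\stackrel{\widehat{\iota}}{\longrightarrow}\mathbb{Z}\longrightarrow 0.
\]

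First I would split this dual sequence. As $\mathbb{Z}$ is discrete and free on one generator, it suffices to choose any character $\chi\in\widehat{G}$ with $\widehat{\iota}(\chi)=1$ and set $s\colon\mathbb{Z}\to\widehat{G}$, $s(n)=n\chi$; this is a continuous homomorphism (the domain being discrete) with $\widehat{\iota}\circ s=\mathrm{id}_{\mathbb{Z}}$, hence a topological section. Dualizing the identity $\widehat{\iota}\circ s=\mathrm{id}_{\mathbb{Z}}$ and using reflexivity together with the naturality of the evaluation map, the bidual $\widehat{\widehat{\iota}}$ is identified with the original inclusion $\iota$, so $r:=\widehat{s}\colon G\cong\widehat{\widehat{G}}\to\widehat{\mathbb{Z}}\cong\mathbb{T}$ satisfies $r\circ\iota=\mathrm{id}_{\mathbb{T}}$. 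Thus $r$ is a continuous retraction onto $\mathbb{T}$, and the map $\mathbb{T}\times\ker r\to G$, $(t,k)\mapsto t+k$, is a topological isomorphism with continuous inverse $g\mapsto(r(g),\,g-r(g))$. This exhibits $\mathbb{T}$ as a direct summand, $G\cong\mathbb{T}\times\ker r$.

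The step I expect to be the main obstacle is bookkeeping rather than depth: one must make sure the splitting is genuinely topological and, more importantly, that after dualizing back the retraction splits the original subgroup $\mathbb{T}$ and not merely some abstract copy of it. This is exactly where the naturality of Pontryagin duality (identifying $\widehat{\widehat{\iota}}$ with $\iota$) is needed. Everything else rests on two standard inputs: exactness of the duality functor on LCA groups and the reflexivity theorem. Conceptually the argument is the statement that $\mathbb{T}$ is an injective object in the category of LCA groups---dually, that the free group $\mathbb{Z}$ is projective---so that $\mathrm{id}_{\mathbb{T}}$ extends along $\iota$ to a retraction; the duality computation above is just a concrete realization of that extension.
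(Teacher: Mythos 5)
Your argument is correct, but note that the paper offers no proof of this statement at all: it is recorded as a Fact and attributed to Armacost \cite[6.16]{A}, so there is nothing internal to compare against. Your route is essentially the standard one, namely the injectivity of $\mathbb{T}$ in the category of LCA groups (equivalently, the extension of characters from closed subgroups), dressed up in the language of exactness of the duality functor. One remark on economy: the proof is finished the moment you choose $\chi\in\widehat{G}$ with $\widehat{\iota}(\chi)=1$, because under the identification $\widehat{\mathbb{T}}\cong\mathbb{Z}$ this condition says precisely that $\chi|_{\mathbb{T}}=\mathrm{id}_{\mathbb{T}}$, so $\chi\colon G\to\mathbb{T}$ is already the continuous retraction you want; the subsequent passage to biduals and the naturality argument merely reconstruct this same $\chi$. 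Also be aware that the ``exactness of Pontryagin duality'' you invoke has, as its only nontrivial ingredient here, exactly the surjectivity of the restriction map $\widehat{G}\to\widehat{\mathbb{T}}$, i.e.\ the character extension theorem --- so the argument is not circular, but it is that theorem, not formal functoriality, that carries all the weight. The final step ($G\cong\mathbb{T}\times\ker r$ via $g\mapsto(r(g),g-r(g))$) is fine and correctly avoids any appeal to the open mapping theorem.
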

\begin{theorem}\label{gen iff sss*} For a  locally compact group $G\in \mathcal N_2$ with $Z(G) \cong \mathbb{T}$ the following are equivalent:
\begin{enumerate}
  \item $G$ is a generalized Heisenberg group;
  \item $G/Z(G)\simeq E\times F$, where $E$ and $F$ are maximal isotropic subgroups with respect to $B$.
\end{enumerate}
\end{theorem}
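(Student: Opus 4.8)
The plan is to derive the equivalence from the algebraic characterization of generalized Heisenberg groups in Theorem~\ref{caratt generalized}, translated through the monotone correspondence of Lemma~\ref{isotropic and abelian}, with the hypotheses $Z(G)\cong\mathbb{T}$ and local compactness entering only to secure the splitting condition via Fact~\ref{T is direct summand}. Throughout I write $K=G/Z(G)$ and let $\pi\colon G\to K$ be the canonical projection; by Lemma~\ref{isotropic and abelian} the assignment $L\mapsto \pi^{-1}(L)$ is a monotone bijection between isotropic subgroups of $K$ (with respect to $B$) and Abelian subgroups of $G$ containing $Z(G)$, carrying maximal isotropic subgroups to maximal Abelian subgroups.

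For the implication $(1)\Rightarrow(2)$ I would invoke Lemma~\ref{heisenberg as semidirect}: writing $G=\mathbb{H}(E,F,A,\omega)$, the subgroups $M_1=Z(G)\times E$ and $M_2=Z(G)\times F$ are maximal Abelian and contain $Z(G)$, while $K\cong E\times F$. Reading the correspondence of Lemma~\ref{isotropic and abelian} backwards, $\pi(M_1)=E$ and $\pi(M_2)=F$ are maximal isotropic subgroups of $K$, which is exactly~(2). Note that this direction uses neither $Z(G)\cong\mathbb{T}$ nor local compactness.

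For $(2)\Rightarrow(1)$ I would set $M_1=\pi^{-1}(E)$ and $M_2=\pi^{-1}(F)$; by the correspondence these are maximal Abelian subgroups containing $Z(G)$, and I would verify the three conditions of Theorem~\ref{caratt generalized}. Condition~(1) is immediate, since $M_1\cap M_2=\pi^{-1}(E\cap F)=\pi^{-1}(\{0\})=Z(G)$; condition~(2) follows because $\pi(M_1M_2)=E+F=K$ together with $Z(G)\leq M_1$ forces $M_1M_2=G$. The crux is condition~(3): I would first note that a maximal Abelian subgroup of a Hausdorff topological group is closed, because the closure of an Abelian subgroup is again Abelian and hence coincides with the subgroup by maximality; thus $M_1,M_2$ are closed in $G$ and therefore locally compact Abelian. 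Since $Z(G)\cong\mathbb{T}$ sits inside each $M_i$, Fact~\ref{T is direct summand} yields that $Z(G)$ splits as a direct summand of $M_1$ and of $M_2$, which is condition~(3). Theorem~\ref{caratt generalized} then applies and gives that $G$ is a generalized Heisenberg group.

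The step I expect to be the main obstacle is upgrading the algebraic conclusion of Theorem~\ref{caratt generalized} to a topological statement, so that $G$ really is a \emph{topological} generalized Heisenberg group. Here I would use that the splitting furnished by Fact~\ref{T is direct summand} is topological, producing closed complements $E\leq M_1$ and $F\leq M_2$ so that $M_1\cong Z(G)\times E$ and $M_2\cong Z(G)\times F$ as topological groups. The explicit bijection $\phi(x,y,z)=j(y)i(x)z$ built in the proof of Theorem~\ref{caratt generalized} is then continuous, and I would argue that it is open, hence a homeomorphism, using these topological direct-product decompositions together with the factorization $G=M_1\cdot F$ and the continuity of $\omega(x,y)=[i(x),j(y)]$, which guarantees that the product topology on $\mathbb{H}(E,F,Z(G),\omega)$ agrees with the topology transported from $G$.
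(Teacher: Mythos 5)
Your proposal is correct and follows essentially the same route as the paper: both directions run through the correspondence of Lemma~\ref{isotropic and abelian} and the algebraic characterization of Theorem~\ref{caratt generalized}, with $Z(G)\cong\mathbb{T}$ and Fact~\ref{T is direct summand} used only to secure the splitting condition. Your two additions --- observing that maximal Abelian subgroups are closed (hence locally compact, so that Fact~\ref{T is direct summand} genuinely applies) and worrying about whether the resulting isomorphism is topological --- are points the paper's proof leaves implicit rather than a different argument.
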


\begin{proof} (1) $\Rightarrow$ (2) By Theorem \ref{caratt generalized}, there exists   a pair of maximal abelian subgroups $M_1,M_2$ of $G$ with $M_1\cap M_2=Z(G)$ and $M_1\cdot M_2= G$. By Lemma \ref{isotropic and abelian}, this gives rise to a pair of maximal isotropic subgroups $E=\pi(M_1)$ and $F=\pi(M_2)$ of $G/Z(G)$ such that $E\cap F=\{ 0\}$ and $E + F = G/Z(G)$. Therefore, $G/Z(G)\simeq E\times F$. 

(2) $\Rightarrow$ (1) Assume that $G/Z(G)\simeq E\times F$, where $E$ and $F$ are maximal isotropic subgroups with respect to $B$. 
Hence, $E\cap F=\{ 0\}$ and $E + F = G/Z(G)$. By Lemma \ref{isotropic and abelian}, this gives rise to a pair of maximal  abelian subgroups $M_1,M_2$ of $G$ with $M_1\cap M_2=Z(G)$ and $M_1\cdot M_2= G$. Moreover,  $Z(G)\cong \mathbb{T}$ is a direct summand of $M_1$ and $M_2$, by Fact \ref{T is direct summand}. Then $G$ satisfies conditions of Theorem \ref{caratt generalized}, so it is a  generalized Heisenberg group. 
\end{proof}

\section{Mumford Groups}

In this section we present another class of topological groups of $\mathcal{N}_2$, named {\em Weyl-Mumford groups}, since inspired by a notion introduced by Mumford in \cite{Mu}, in the framework of topological groups under more stringent conditions. The topological groups obtained in this way 
cover the special case treated by Prasad and Vemuri in \cite{Prasad}, where they carried out this 
construction only for {\em locally compact abelian groups $G$ with $Z(G) =\mathbb{T}$. }

\begin{definition}\label{M-groups} A {\it Mumford group} is a topological group $(G,\tau)\in \mathcal{N}_2$, such that the topological Mumford map of $G$,
\begin{displaymath}
\Mt:G/Z(G)\longrightarrow Chom(G/Z(G), Z(G))
\end{displaymath}
 is a topological isomorphism. 
\end{definition}

\begin{lemma}\label{def_psi}
Let $E,F$ and $A$ be locally compact abelian (LCA) groups. Then map
\begin{displaymath}
\psi: Chom(E,A)\times Chom(F,A)\longrightarrow Chom(E\times F,A)
\end{displaymath} defined by setting $\psi(f,g)(x,y)=g(y)-f(x)$ for every $f\in Chom(E,A)$, $g\in Chom(F,A)$, $x\in E$ and $y\in F$ is a topological isomorphism.
\end{lemma}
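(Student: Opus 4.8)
The plan is to recognise $\psi$ as a signed form of the canonical bijection coming from the universal property of the product $E\times F$, and then to verify that this bijection matches the compact-open topologies on the two sides. Since the assignment $(f,g,x,y)\mapsto g(y)-f(x)$ is additive in each of $f,g$ and $A$ is abelian, $\psi$ is a homomorphism of topological abelian groups, so throughout it suffices to argue continuity at the neutral elements. First I would record that $\psi$ is well defined and a group homomorphism: each $\psi(f,g)$ is a homomorphism because addition in $A$ is, and is continuous as a composite of continuous maps; the identity $\psi((f,g)+(f',g'))=\psi(f,g)+\psi(f',g')$ follows by evaluating both sides at an arbitrary $(x,y)$.

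Next I would establish the algebraic isomorphism explicitly. Let $i_E\colon E\to E\times F$, $x\mapsto(x,0)$ and $i_F\colon F\to E\times F$, $y\mapsto(0,y)$ be the canonical continuous injections, and define $\Phi\colon Chom(E\times F,A)\to Chom(E,A)\times Chom(F,A)$ by $\Phi(h)=(-(h\circ i_E),\,h\circ i_F)$. Because $h$ is a homomorphism and $(x,y)=(x,0)+(0,y)$, one has $h(x,y)=h(x,0)+h(0,y)$, and a direct evaluation then yields $\psi\circ\Phi=\mathrm{id}$ and $\Phi\circ\psi=\mathrm{id}$. Hence $\psi$ is a bijective homomorphism whose inverse is $\Phi$.

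It remains to match the topologies. For compact $K\subseteq E\times F$ and an open neighbourhood $U$ of $0$ in $A$ write $W_{K,U}=\{h\in Chom(E\times F,A):h(K)\subseteq U\}$; by the standard properties of the compact-open topology these form a base of neighbourhoods of $0$, and analogously on each factor. For continuity of $\psi$ I would fix $W_{K,U}$, take the compact projections $K_E,K_F$ of $K$ (so $K\subseteq K_E\times K_F$), and choose an open neighbourhood $U'$ of $0$ with $U'-U'\subseteq U$; then $f(K_E)\subseteq U'$ and $g(K_F)\subseteq U'$ force $g(y)-f(x)\in U'-U'\subseteq U$ for all $(x,y)\in K$, so $\psi$ sends the neighbourhood $W_{K_E,U'}\times W_{K_F,U'}$ into $W_{K,U}$. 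For continuity of $\Phi=\psi^{-1}$ I would fix a basic neighbourhood $W_{K_E,U_1}\times W_{K_F,U_2}$ of the origin; since $i_E(K_E)$ and $i_F(K_F)$ are compact in $E\times F$ and $-U_1$ is an open neighbourhood of $0$, the set $W_{i_E(K_E),\,-U_1}\cap W_{i_F(K_F),\,U_2}$ is a neighbourhood of $0$ in $Chom(E\times F,A)$, and any $h$ in it satisfies $-(h\circ i_E)(K_E)\subseteq U_1$ and $(h\circ i_F)(K_F)\subseteq U_2$, i.e.\ $\Phi(h)\in W_{K_E,U_1}\times W_{K_F,U_2}$. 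This shows $\psi$ is a topological isomorphism.

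The only point with genuine content beyond bookkeeping is the compact-open matching, and specifically the fact that a compact $K\subseteq E\times F$ is contained in the product $K_E\times K_F$ of its compact projections (together with the dual observation that $i_E(K_E)$, $i_F(K_F)$ are compact in $E\times F$); this is what lets a single pair of basic neighbourhoods on the factor side dominate $W_{K,U}$ and conversely. I would also remark that local compactness of $E,F,A$ is not actually needed here: the argument goes through for arbitrary topological abelian groups, and the LCA hypothesis merely reflects the ambient setting in which the lemma will be applied.
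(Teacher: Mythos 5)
Your proposal is correct and follows essentially the same route as the paper: continuity of $\psi$ via the compact projections $K_E,K_F$ together with a neighbourhood $U'$ satisfying $U'-U'\subseteq U$, and the inverse given by $h\mapsto(-(h\circ i_E),\,h\circ i_F)$, which is exactly the paper's map $\phi(f)=(-f|_E,f|_F)$. You merely spell out the continuity of the inverse (and the dispensability of local compactness) in more detail than the paper does.
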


\begin{proof}  Let $U\subseteq A$ be an open subset, $L\subseteq E\times F$ be compact and let $\pi_E$ and $\pi_F$ the projections onto $E$ and $F$. Then $\pi_E(L)=L_E$ and $\pi_F(L)=L_F$ are compact and there exists open subset $V$ of $A$ such that $V-V\subseteq U$. Therefore, $\psi(f,g)\in W_{L,U}$ for every $(f,g)\in W_{L_E,V}\times W_{L_F,V}$, so the map $\psi$ is continuous.\\
The map given by $\phi(f)=(-f|_E,f|_F)$ for every $f\in Chom(E\times F,A)$, where $f|_E$ and $f|_F$ denotes respectively the restrictions of $f$ to $E$ and $F$ is the inverse of $\psi$. Since the restrictions are continuous, $\phi$ is continuous too.
%Let $K_E\subseteq E$ and $K_F\subseteq F$ compact subset and $U,U'$ open subsets of $\mathbb{T}$. Then there exists an open subset $V\subseteq A$ such that $-V\subset U'$. Then $\psi^{-1}(W_{K_E\times K_F,U\cap V})\subseteq W_{K_E,U}\times W_{K_F,U'}$ and then $\psi^{-1}$ is continuous too.
\end{proof}

\begin{definition}\label{riflexive}
For the Abelian topological groups $E$ and $A$ and the bilinear map
\begin{displaymath}
\omega: E\times Chom(E,A)\longrightarrow A,\quad (x,g)\mapsto g(x)
\end{displaymath}
we call the group $E$ $A$-\textit{reflexive} if the map $\omega_E: E \to  Chom(Chom(E,A),A) $ is a topological isomorphism. 
\end{definition}

\begin{theorem}\label{Topgen are PVH iff}
Let $G=\mathbb{H}(E,F,A,\omega)$ be a topological generalized Heisenberg group. Then $G$ is a Mumford group if and only if $\omega_E$ and $\omega_F$ in  \eqref{Laaast:Eq} are topological isomorphisms, so $E$ and $F$ are $A$-reflexive.
\end{theorem}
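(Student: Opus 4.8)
The plan is to factor $\Mt$ through the topological isomorphism $\psi$ of Lemma~\ref{def_psi}, which converts the single condition ``$\Mt$ is a topological isomorphism'' into the pair of conditions ``$\omega_E$ and $\omega_F$ are topological isomorphisms''. First I would record the two inputs. By Definition~\ref{M-groups}, $G$ is a Mumford group exactly when the topological Mumford map $\Mt: E\times F\longrightarrow Chom(E\times F,A)$ is a topological isomorphism, and by Corollary~\ref{evaluetion are continuous} this map is given explicitly by
\[
\Mt(x,y)(z,u)=\omega_E(x)(u)-\omega_F(y)(z),\qquad x,z\in E,\ y,u\in F .
\]
On the other hand, Lemma~\ref{def_psi} provides a topological isomorphism $\psi: Chom(E,A)\times Chom(F,A)\to Chom(E\times F,A)$ with $\psi(f,g)(z,u)=g(u)-f(z)$.

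The computational core is to compose $\Mt$ with $\psi^{-1}$ and read off the result. Matching the formula for $\Mt(x,y)$ against $\psi(f,g)$ term by term forces $g=\omega_E(x)\in Chom(F,A)$ and $f=\omega_F(y)\in Chom(E,A)$, so that $\Mt(x,y)=\psi\bigl(\omega_F(y),\omega_E(x)\bigr)$ and hence
\[
\psi^{-1}\circ\Mt : E\times F\longrightarrow Chom(E,A)\times Chom(F,A),\qquad (x,y)\longmapsto\bigl(\omega_F(y),\omega_E(x)\bigr).
\]
This is precisely the product map $\omega_F\times\omega_E$ precomposed with the coordinate flip $s:E\times F\to F\times E$, $s(x,y)=(y,x)$.

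To conclude, I would invoke that $\psi$ (Lemma~\ref{def_psi}) and $s$ are topological isomorphisms, so $\Mt$ is a topological isomorphism if and only if $\omega_F\times\omega_E$ is one. Since a product of continuous homomorphisms between topological groups is a topological isomorphism exactly when each factor is (bijectivity and bicontinuity pass coordinatewise, with inverse $\omega_F^{-1}\times\omega_E^{-1}$), this yields that $G$ is a Mumford group if and only if both $\omega_E$ and $\omega_F$ are topological isomorphisms. For the closing clause, note that a topological isomorphism $\omega_F:F\to Chom(E,A)$ identifies $F$ with $Chom(E,A)$, under which $\omega_E:E\to Chom(F,A)\cong Chom(Chom(E,A),A)$ becomes the evaluation map $x\mapsto\bigl(g\mapsto g(x)\bigr)$ (because $\omega_E(x)(\omega_F(y))=\omega(x,y)=\omega_F(y)(x)$); its being a topological isomorphism is exactly the $A$-reflexivity of $E$ in the sense of Definition~\ref{riflexive}, and symmetrically for $F$.

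The one nontrivial ingredient, and the step I expect to carry the real weight, is the topological identification $Chom(E\times F,A)\cong Chom(E,A)\times Chom(F,A)$ furnished by $\psi$, i.e. that the evident algebraic splitting by restriction is a homeomorphism for the compact-open topologies; everything after that is formal bookkeeping. The remaining points to check carefully are that $\psi^{-1}$ is genuinely continuous (so that the equivalence transfers in \emph{both} directions and not merely one), and that $\omega_E,\omega_F$ land in the continuous-homomorphism groups in the first place — both of which are already guaranteed, respectively, by Lemma~\ref{def_psi} and Corollary~\ref{evaluetion are continuous}.
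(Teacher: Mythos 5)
Your proposal is correct and follows essentially the same route as the paper: both factor $\Mt$ as $\psi\circ(\omega_F\times\omega_E)\circ\tau$ using the topological isomorphism $\psi$ of Lemma~\ref{def_psi} and the coordinate flip, then reduce to the factors, and both obtain $A$-reflexivity by transporting $\omega_E$ along $\omega_F^{-1}$ to the evaluation map $x\mapsto(f\mapsto f(x))$. The only cosmetic issue is the expression $\omega_E(x)(\omega_F(y))$, which should read $\bigl(\omega_E(x)\circ\omega_F^{-1}\bigr)(\omega_F(y))=\omega_E(x)(y)$, exactly as in the paper's computation.
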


\begin{proof} The following diagram, where $\psi$ is the isomorphism as in Lemma \ref{def_psi} and $\tau$ is the canonical flip, is commutative.
\begin{center}
$\xymatrixcolsep{8pc}\xymatrix{
E\times F\ar[r]^{\Mt}  \ar[d]^{\tau} & Chom(E\times F,A)  \\
F\times E \ar[r]^{\omega_F\times\omega_E}& Chom(E,A)\times Chom(F,A) \ar[u]^{\psi} .
}
$
\end{center} 
Since $\psi$ is a topological isomorphism, we can conclude that the topological Mumford map $\Mt$ is a topological isomorphism if and only if $\omega_F\times\omega_E$  is a topological isomorphism, hence if and only if both $\omega_E$ and $\omega_F$ are topological isomorphisms. \\
The map:
$$ 
E\longrightarrow Chom(Chom(E,A),A), \quad x\mapsto \omega_E(x)\circ \omega_F^{-1}
$$
is a topological isomorphism, and $\omega_E(x)(\omega_F^{-1}(f))=\omega(x,\omega_F^{-1}(f))=\omega_F(\omega_F^{-1}(f))(x)=f(x)$ for every $f\in Chom(E,A)$. Therefore, $E$ is $A$-reflexive. The same argument shows that $F$ is $A$-reflexive.
\end{proof}

Now we see the impact of imposing reflexivity in the description of the class of the topological Heisenberg groups. 

\begin{corollary}\label{standard and reflex}
A standard topological Heisenberg group $\mathbb{H}(E,A)$ is a Mumford group if and only if $E$ is $A$-reflexive. \end{corollary}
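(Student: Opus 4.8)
The plan is to obtain this as a direct specialization of Theorem \ref{Topgen are PVH iff}, applied to the case in which the second parameter group is itself the homomorphism group. Recall that by definition the standard topological Heisenberg group $\mathbb{H}(E,A)$ is the generalized Heisenberg group $\mathbb{H}(E, Chom(E,A), A, \omega)$, where $\omega$ is the evaluation map $\omega(x,g) = g(x)$. In particular, here the ``$F$'' of the general theory is $F = Chom(E,A)$.

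First I would invoke Theorem \ref{Topgen are PVH iff} for $G = \mathbb{H}(E, Chom(E,A), A, \omega)$: it asserts that $G$ is a Mumford group if and only if both maps $\omega_E$ and $\omega_F$ from \eqref{Laaast:Eq} are topological isomorphisms. The decisive step is then to identify the map $\omega_F$ explicitly. Since $F = Chom(E,A)$ and, by definition of $\omega_F$ in Corollary \ref{evaluetion are continuous}, one has $\omega_F(g)(x) = \omega(x,g) = g(x)$ for every $g \in Chom(E,A)$ and $x \in E$, it follows that $\omega_F \colon Chom(E,A) \to Chom(E,A)$ coincides with the identity map $id_{Chom(E,A)}$, which is trivially a topological isomorphism.

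Consequently the two-fold condition of Theorem \ref{Topgen are PVH iff} collapses to the single requirement that $\omega_E \colon E \to Chom(Chom(E,A), A)$ be a topological isomorphism; and by Definition \ref{riflexive} this is precisely the statement that $E$ is $A$-reflexive. This settles both implications at once. I do not anticipate any genuine obstacle here: the only point requiring a line of verification is the identification $\omega_F = id_{Chom(E,A)}$, after which the corollary is an immediate specialization of the theorem (and one may note it is consistent with the second display in the proof of Theorem \ref{Topgen are PVH iff}, which already exhibits $A$-reflexivity of $E$ via $\omega_E \circ \omega_F^{-1}$, here simply $\omega_E$).
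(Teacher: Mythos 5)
Your proposal is correct and follows exactly the paper's route: the paper's proof is a one-line appeal to Theorem \ref{Topgen are PVH iff}, reducing the condition to $\omega_E$ being a topological isomorphism, i.e.\ $A$-reflexivity of $E$. Your explicit verification that $\omega_F=id_{Chom(E,A)}$ (so the second condition of the theorem is automatic) is the small detail the paper leaves implicit, and it is right.
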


\begin{proof}
Follows from Theorem \ref{Topgen are PVH iff}, since $\omega_E$ has to be a topological isomorphism.
\end{proof}

It turns out that topological generalized Heisenberg group which are also Mumford are precisely the standard Heisenberg groups given by pair of groups $E$ and $A$ with $E$ $A$-reflexive.

\begin{corollary}\label{standard are PVH}
Let $G=\mathbb{H}(E,F,A,\omega)$ be a topological generalized Heisenberg group. Then the following are equivalent:
\begin{enumerate}
\item[(i)] $G$ is a Mumford group;
\item[(ii)] the maps $\iota_E$ and $\iota_F$ as defined in (\ref{JE}) and (\ref{JF}) are isomorphisms;
\item[(iii)] $E$ is $A$-reflexive;
\item[(iv)] $F$ is $A$-reflexive.
\end{enumerate}
\end{corollary}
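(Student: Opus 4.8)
The plan is to funnel all four conditions through the two continuous injective homomorphisms $\omega_E\colon E\to Chom(F,A)$ and $\omega_F\colon F\to Chom(E,A)$ supplied by Corollary \ref{evaluetion are continuous}. First I would record a purely formal reduction by inspecting the definitions \eqref{JE} and \eqref{JF}: since $id_E$, $id_F$, $id_A$ and the flip $\tau$ are all topological isomorphisms, the embedding $\iota_E=id_E\times\omega_F\times id_A$ is a topological isomorphism onto $\mathbb{H}(E,A)$ if and only if $\omega_F$ is a topological isomorphism, and symmetrically $\iota_F$ is a topological isomorphism if and only if $\omega_E$ is. Hence condition (ii) is equivalent to the single assertion that \emph{both} $\omega_E$ and $\omega_F$ are topological isomorphisms.

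With this reduction, the equivalence (i) $\Leftrightarrow$ (ii) is immediate from Theorem \ref{Topgen are PVH iff}, which states exactly that $G$ is a Mumford group if and only if $\omega_E$ and $\omega_F$ are topological isomorphisms. The same theorem also delivers (i) $\Rightarrow$ (iii) and (i) $\Rightarrow$ (iv) directly, since it asserts that in the Mumford case $E$ and $F$ are $A$-reflexive. Thus the only substantive work is the passage from reflexivity of a single factor back to the Mumford property, that is (iii) $\Rightarrow$ (i); the implication (iv) $\Rightarrow$ (i) will follow by the symmetric argument that replaces $\iota_E$ and $\mathbb{H}(E,A)$ by $\iota_F$ and $\mathbb{H}(F,A)$ via the flip $\tau$.

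For (iii) $\Rightarrow$ (i) I would exploit the embedding $\iota_E\colon G\hookrightarrow\mathbb{H}(E,A)$ of Lemma \ref{gen embed in standard} together with Corollary \ref{standard and reflex}: because $E$ is $A$-reflexive, the standard group $\mathbb{H}(E,A)$ is itself a Mumford group. By the first paragraph it therefore suffices to prove that $\iota_E$ is onto, equivalently that the continuous injection $\omega_F\colon F\to Chom(E,A)$ is in fact a topological isomorphism; once this holds, $G\cong\mathbb{H}(E,A)$, and being a Mumford group is an isomorphism invariant, so $G$ is Mumford. To analyze $\omega_F$ I would use the transpose identity $\omega_E=\omega_F^{*}\circ\alpha_E$, where $\alpha_E\colon E\to Chom(Chom(E,A),A)$ is the reflexivity isomorphism of Definition \ref{riflexive} and $\omega_F^{*}$ is the $A$-dual of $\omega_F$ (precomposition with $\omega_F$); the defining relation $\omega_E(x)(y)=\omega(x,y)=\omega_F(y)(x)$ makes this identity a one-line computation.

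The hard part will be precisely the surjectivity and openness of the evaluation maps: converting reflexivity of $E$, which is an isomorphism onto the \emph{double} dual $Chom(Chom(E,A),A)$, into the statement that $\omega_F$ already fills out the \emph{single} dual $Chom(E,A)$. Through $\omega_E=\omega_F^{*}\circ\alpha_E$, with $\alpha_E$ a topological isomorphism, the problem reduces to showing that the transpose $\omega_F^{*}$ is a topological isomorphism and that this forces $\omega_F$ itself to be one. Controlling the behaviour of these transposes under the compact-open topology, and in particular ruling out that $\omega_F$ has merely dense (rather than full) image, is the delicate point; I expect this to be where the separatedness of $\omega$ and the duality estimates packaged in Lemma \ref{def_psi} must be invoked carefully.
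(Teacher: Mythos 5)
Your reduction of (ii) to the single assertion that both $\omega_E$ and $\omega_F$ are topological isomorphisms is correct, and with it the equivalence (i) $\Leftrightarrow$ (ii) and the implications (i) $\Rightarrow$ (iii), (i) $\Rightarrow$ (iv) all follow from Theorem \ref{Topgen are PVH iff} exactly as you say; this matches the paper's route. The genuine gap is the one you yourself flag and then leave open: you never prove (iii) $\Rightarrow$ (i). The transpose identity $\omega_E=\omega_F^{*}\circ\alpha_E$ only relocates the problem, because $A$-reflexivity of $E$ is a property of the pair $(E,A)$ alone and carries no information about how $F$ sits inside $Chom(E,A)$ via $\omega_F$; nothing in the hypotheses forces $\omega_F$ to be surjective or open, and no care with the compact-open topology or with Lemma \ref{def_psi} will extract that.

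In fact this implication fails, so the gap cannot be closed: the paper's own Example \ref{non-ex}(b), with $E=\mathbb{R}$, $A=\mathbb{T}$ and $F=\widehat{\mathbb{R}}$ retopologized discretely, is a topological generalized Heisenberg group in which $E$ is $A$-reflexive (ordinary Pontryagin duality) while $\omega_F$ is a continuous bijection that is not open, so the group is not Mumford. The paper's one-line argument for (ii) $\Leftrightarrow$ (iii) (``follows by Corollary \ref{standard and reflex}, by virtue of the equivalence between (i) and (ii)'') is itself circular in the direction (iii) $\Rightarrow$ (ii): Corollary \ref{standard and reflex} speaks about $\mathbb{H}(E,A)$, and transferring its conclusion to $G$ already presupposes that $\iota_E$ is an isomorphism, which is (ii). So your instinct about where the difficulty sits is exactly right; what it reveals is that (iii) and (iv) are genuine consequences of (i) but not equivalent to it as stated, and any honest proof must either add a hypothesis guaranteeing that $\omega_F$ is a topological isomorphism onto $Chom(E,A)$ (resp.\ $\omega_E$ onto $Chom(F,A)$) or drop the claimed implications (iii) $\Rightarrow$ (i) and (iv) $\Rightarrow$ (i).
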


\begin{proof}
(i) $\Leftrightarrow$ (ii) The group $G$ embeds in the standard Heisenberg groups $\mathbb{H}(E,A)$ and $\mathbb{H}(F,A)$, by Lemma \ref{gen embed in standard}. By Theorem \ref{Topgen are PVH iff}, $G$ is Mumford if and only if the embeddings defined in Lemma \ref{gen embed in standard} are both topological isomorphisms.\\
(ii) $\Leftrightarrow$ (iii) Follows by Corollary \ref{standard and reflex}, by virtue of the equivalence between (i) and (ii).\\ 
(ii) $\Leftrightarrow$ (iv) The same argument of (ii) $\Leftrightarrow$ (iii) applies.
\end{proof}
\begin{example}\label{non-ex}
\begin{itemize}
\item[(a)] Let $K$ be a discrete infinite field and let $G=\mathbb{H}(K,K,\mathbb{T},\omega)$ with $\omega(x,y)=xy$. Then $G$ is a locally compact generalized Heisenberg group with $Z(G)\cong \mathbb{T}$. The map $\omega_K:K\longrightarrow \widehat{K}$ is not a topological isomorphism, since group $\widehat{K}$ is compact and $K$ is infinite. By Corollary \ref{standard are PVH}(iii), $G$ is not a Mumford group.

\item[(b)] Let $E$ and $A$ be topological groups, such that $Chom(E,A)$ separates the points of $E$
and its compact-open topology is not discrete (e.g., take any locally compact, non compact group $E$ and $A=\mathbb{T}$).
Now let $F= Chom(E,A)$, endowed with the discrete topology. Then $G=\mathbb{H}(E,F,A,\omega)$, with $$\omega:E\times Chom(E,A)\to A,\quad (x,f)\mapsto f(x),$$ is a topological generalized Heisenberg group. Since $\omega_F=id_{Chom(E,F)}$ is not open, $G$ is not a Mumford group by Corollary \ref{standard are PVH}(ii).
%\item[(b)]  Let $\tau_d$ be the discrete topology on $F=(Chom(E,A),\tau_d)$. Then $G=\mathbb{H}(E,F,A,\omega)$, with $$\omega:E\times Chom(E,A)\to A,\quad (x,f)\mapsto f(x),$$ is a topological generalized Heisenberg group. If the compact-open topology $\tau_{co}$ on $Chom(E,A)$ is not discrete, then 
%$$\omega_F=id:(Chom(E,A),\tau_d)\longrightarrow (Chom(E,A),\tau_{co})$$
%is not open. Hence, by Corollary \ref{standard are PVH}(ii), $G$ is not a Mumford group. For instance one can take any locally compact, non compact group $E$, $A=\mathbb{T}$ and $F=Chom(E,\mathbb{T})$ with the discrete topology.
\end{itemize}
\end{example}

A family of groups which are both generalized Heisenberg and topological Mumford are given by Mackey-Weil groups. For them the map $\omega_{G}$ is the natural transformation arising from Pontryagin duality.

\begin{corollary}\label{H(G) is WM} Every Mackey-Weil group $\mathbb{H}(E)$ is a Mumford group. \end{corollary}

\begin{proof}
It follows from Corollary \ref{standard are PVH}, since the map $\omega_E$ defined by setting $\omega_E(x)(\chi)=\chi(x)$, for every $x\in E$ and every $\chi\in \widehat{E}$ is a topological isomorphism, since $E$ is a LCA group.
\end{proof}
 
\begin{example}\label{examples and non-examples}  
\begin{itemize}
\item[(a)] Consider $\mathbb{R}$ endowed with the usual topology. The group $\mathbb{H}(\mathbb{R})$ is both a generalized Heisenberg group and a Mumford group, since it is a Mackey-Weil group.\\ The Heisenberg group $\mathbb{H}_{\mathbb{R}}=\mathbb{H}(\mathbb{R},\mathbb{R},\mathbb{R},\omega)$, with $\omega(x,y)=xy$ is a Mumford group, since $$\omega_{\mathbb{R}}:\mathbb{R}\to Chom(\mathbb{R},\mathbb{R}), \quad x\mapsto (y\mapsto xy)$$ is a topological isomorphism.
\item[(b)] Let $K$ be a topological field, such that $\mathbb{Q}$ is a dense subgroup of $K$. The topological Heisenberg group $\mathbb{H}_K=\mathbb{H}(K,K,K,\omega)$, with $\omega(x,y)=xy$ is a Mumford group, since $$\omega_{K}:K\to Chom(K,K), \quad x\mapsto (y\mapsto xy)$$ is a topological isomorphism.
\end{itemize}
\end{example}

%\begin{example}\NB
%Let $G=\mathbb{H}(E,F,\mathbb{T},\omega)$ be a locally compact generalized Heisenberg group. Then $E$, $F$ are LCA groups and $\omega$ is separated and continuous. Therefore, $Im(\omega_F)\leq \widehat{E}$ separates the point of $E$, hence it has to be dense (citazione). Since $Im(\omega_F$ is locally compact and then complete, $\omega_F$ is surjective. So, up to isomorphism $F$ is given by $(Chom(E,\mathbb{T}),\tau)$ with $\tau$ finer than the compact-open topology, since $$\omega_F=id:(Chom(E,\mathbb{T}),\tau)\to \widehat{E}$$ has to be continuous. So, $G$ is not a Mumford group if and only if $\tau$ is strickly finer than the compact-open topology and locally compact. \footnote{Non so se si pu'o formulare come teorema del tipo. A locally compact Heisenberg group $G$ with center isomorphic to $\mathbb{T}$ is not a Mumford groups if and only if it is isomorphic to $\mathbb{H}(E,F,\mathbb{T},\omega)$ where $E$ is a LCA, $F=(Chom(E,\mathbb{T}),\tau)$ with $\tau$ locally compact and strickly finer than the compact-open topology on $Chom(E,A)$.} 
%\end{example}

\section{Locally Compact Mumford groups and Symplectic Self-Dualities}

In this section we discuss symplectic self-dualities and their relation to locally compact Mumford groups $G$ with $Z(G)=\mathbb{T}$.

\begin{definition} For a LCA group $K$, a {\em symplectic self-duality} is an isomorphism 
\begin{displaymath}
\nabla:K\longrightarrow \widehat{K}
\end{displaymath} such that $\nabla(x)(x)=0$ for every $x\in K$. 
\end{definition}

Every self-duality $(K,\nabla)$ defines a bilinear form $b_\nabla: K\times K \longrightarrow \mathbb{T}$, by letting $b_\nabla(x,y) = \nabla(x)(y)$
for $x,y \in K$. The bilinear form $b_\nabla$ is separated, as $\widehat{K}$ separates the points of $K$.

Two self-dualities $(K,\nabla)$ and $(K',\nabla')$ are said to be  {\em  isomorphic} if the corresponding bilinear forms $b_\nabla$
and $b_{\nabla'}$ are isomorphic (i.e.,  there exists an isomorphism $\phi:K\longrightarrow K'$ such that $\nabla'(\phi(x))(\phi(y))=\nabla(x)(y)$ for every $x,y\in K$). Clearly, a self-duality $(K,\nabla)$ is symplectic precisely when its bilinear form $b_\nabla$ is alternating. 

A subgroup $H$ of $K$ is called {\em isotropic w.r.t. $\nabla$}, if $H$ is isotropic w.r.t. $b_\nabla$, i.e., if $\nabla(x)(y)=0$ for every $x,y\in H$, or, shortly,  $\nabla(H)(H)=0$).

\begin{example}\label{Exa:ssd} Let $A$ be a LCA group. Then $K=A\times \widehat{A}$ is a LCA group and  the map $\nabla_A: K \to \widehat K$, defined by $\nabla_A(x,f)(y,g)=g(x)-f(y)$, is a  symplectic self-duality.  Following \cite{Prasad}  we call a symplectic self-duality {\em standard } if it is 
isomorphic to $(A\times \widehat{A}, \nabla_A)$ for some locally compact abelian group $A$. 
\end{example}

A suitable characterization of standard self dualities can be found as follows:  

\begin{lemma}\label{maximal iso for standard}
Let $A$ be a LCA group and $\nabla$ be the standard self duality related to $K=A\times \widehat{A}$. Then $A$ and $\widehat{A}$ are maximal isotropic subgroups of $K$.
\end{lemma}

\begin{proof}
 The subgroups $A\times\lbrace 0\rbrace$ and $\lbrace 0\rbrace\times \widehat{A}$ are isotropic, since $\nabla(x,0)(y,0)=0$ and $\nabla(0,f)(0,g)=g(0)-f(0)=0$. Let us assume that $A\times\lbrace 0\rbrace\leq B$ for some isotropic subgroup $B$. If $(y,g)\in B$, then 
\begin{eqnarray*}
\nabla(x,0)(y,g)= g(x)=0
\end{eqnarray*}
for every $x\in A$. Hence, $g=0$ and $B= A\times\lbrace 0\rbrace$. Similalry, let $ \{0\}\times \widehat{A}\leq B$ be a isotropic subgroup and $(x,g)\in B$. Then $\nabla(0,h)(x,g)=-h(x)=0$ for every $h\in \widehat{A}$. Since $\widehat{A}$ separates the point, then $x=0$.
\end{proof}

This property characterizes standard self dualities up to isomorphism as shown in \cite[Lemma 4.2]{Prasad}. Namely,  
if $(K,\nabla)$ is a symplectic self duality such that $K\simeq E\times F$ with $E$ and $F$ maximal isotropic subgroup, then $(K,\nabla)$ is a standard self-duality. 

\begin{remark}\label{PVH>ssd} It turns out that the Mumford groups $G$ such that $Z(G)=\mathbb{T}$ supply symplectic self-dualities. Indeed, 
if $G$ is such a group, then the topological Mumford map $\Mt: K \to \widehat K$ is a symplectic self-duality on $K=G/Z(G)$.
\end{remark}

The next theorem offers a characterization of generalized Heisenberg groups in the family of Mumford groups, with 
center  topologically isomorphic to $\mathbb T$, via properties of the symplectic self-duality $(K,\Mt)$. It turns out that they are Mackey-Weil groups up to isomorphism. 

\begin{theorem}\label{gen iff sss}
Let $G$ be a locally compact Mumford group such that $Z(G)=\mathbb{T}$ and let $K=G/Z(G)$. The following are equivalent:
\begin{enumerate}
 \item $G$ is a generalized Heisenberg group;
 \item $(K,\Mt)$ is a standard self-duality;
 \item $G$ is isomorphic to a Mackey-Weil group $\mathbb{H}(E)$ for some LCA group $E$.
\end{enumerate}
\end{theorem}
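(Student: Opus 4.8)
The plan is to prove the cycle of implications $(3) \Rightarrow (1) \Rightarrow (2) \Rightarrow (3)$, relying throughout on Remark \ref{PVH>ssd}: since $G$ is a Mumford group with $Z(G)=\mathbb{T}$, the topological Mumford map $\Mt: K \to \widehat K$ is already a symplectic self-duality on $K=G/Z(G)$, so the three conditions are all statements about the single self-duality $(K,\Mt)$.

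The implication $(3) \Rightarrow (1)$ is immediate: a Mackey-Weil group $\mathbb{H}(E)$ is by construction a (topological) generalized Heisenberg group (Example \ref{Mackey}), so if $G \simeq \mathbb{H}(E)$ then $G$ is a generalized Heisenberg group. For $(1) \Rightarrow (2)$, I would invoke Theorem \ref{gen iff sss*}: if $G$ is a generalized Heisenberg group, then $K \simeq E \times F$ with $E$ and $F$ maximal isotropic with respect to $B$. Since the bilinear form of the self-duality $\Mt$ satisfies $b_{\Mt}(x,y) = \Mt(x)(y) = B(x,y)$, isotropy with respect to $B$ coincides with isotropy with respect to $\Mt$; hence $E$ and $F$ are maximal isotropic subgroups for $\Mt$ and $K \simeq E\times F$. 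The characterization of standard self-dualities recalled after Lemma \ref{maximal iso for standard} (namely \cite[Lemma 4.2]{Prasad}) then yields that $(K,\Mt)$ is a standard self-duality.

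The main work lies in $(2) \Rightarrow (3)$. Assuming $(K,\Mt)$ is standard, it is isomorphic to $(A \times \widehat A, \nabla_A)$ for some LCA group $A$ (Example \ref{Exa:ssd}). I would then compare $G$ with the Mackey-Weil group $\mathbb{H}(A)$: for the latter, $Z(\mathbb{H}(A)) = \mathbb{T}$, the quotient $\mathbb{H}(A)/Z(\mathbb{H}(A)) \simeq A \times \widehat A$, and computing its factorized commutator map as in Remark \ref{B and tilde for generalized}(a) gives $B((x,f),(y,g)) = g(x) - f(y)$, so its topological Mumford map is exactly $\nabla_A$. Thus $G$ and $\mathbb{H}(A)$ are two locally compact central extensions of $\mathbb{T}$ with the same quotient $K$ and isomorphic Mumford maps. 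Invoking the fact recorded after Definition \ref{Mumford map}, that the Mumford map characterizes the central extension up to isomorphism (\cite[Proposition 3.5]{Prasad}, extended to the present setting), I conclude $G \simeq \mathbb{H}(A)$, which is $(3)$.

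The step I expect to be the main obstacle is precisely this last one: one must ensure that the isomorphism $(K,\Mt) \simeq (A\times\widehat A, \nabla_A)$ of self-dualities transports to an isomorphism of the associated central extensions \emph{with center} $\mathbb{T}$, i.e. that matching the Mumford maps (equivalently, the bilinear cocycles, up to the usual symmetric-coboundary ambiguity) yields a genuine \emph{topological} group isomorphism $G \simeq \mathbb{H}(A)$ rather than merely an abstract one. Verifying continuity and openness of the resulting isomorphism, using local compactness and the fact that $\Mt$ is a topological isomorphism, is where the care is needed; once this is in place, the cycle closes and all three conditions are equivalent.
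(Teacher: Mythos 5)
Your cycle $(3)\Rightarrow(1)\Rightarrow(2)\Rightarrow(3)$ is sound in outline, but it closes the loop differently from the paper, and the difference matters. Your $(1)\Rightarrow(2)$ is exactly the alternative route the paper itself mentions after its proof (Theorem \ref{gen iff sss*} combined with \cite[Lemma 4.2]{Prasad}); the paper's displayed argument instead computes $\Mt$ explicitly on $E\times\widehat{E}$ via $\omega_F^{-1}$, and both are fine. The real divergence is the last implication. The paper never proves $(2)\Rightarrow(3)$ directly: it proves $(2)\Rightarrow(1)$ by transporting the maximal isotropic pair $A,\widehat{A}$ of the standard self-duality to $K$ (Lemma \ref{maximal iso for standard}) and then applying the purely structural characterization Theorem \ref{gen iff sss*}, whose proof reduces everything to Theorem \ref{caratt generalized} together with the splitting of $\mathbb{T}$ as a direct summand (Fact \ref{T is direct summand}); it then obtains $(3)$ from $(1)$ via Corollary \ref{standard are PVH}. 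Your $(2)\Rightarrow(3)$ instead invokes the rigidity statement that the Mumford map determines the central extension up to isomorphism (\cite[Proposition 3.5]{Prasad}). That statement appears in the paper only as an unproved background remark, with no discussion of whether the resulting isomorphism is topological, and you yourself flag that verifying continuity and openness of $G\simeq\mathbb{H}(A)$ is the main obstacle without carrying it out; this also implicitly requires realizing $G$ by a suitable cocycle on $K$, i.e.\ choosing a good section of $G\to K$. So as written your proof has one load-bearing step that is deferred rather than done. The paper's route shows this machinery is avoidable: once $(K,\Mt)$ is standard, the maximal-isotropic decomposition plus Fact \ref{T is direct summand} already produce the subgroups $M_1,M_2$ required by Theorem \ref{caratt generalized}, and the identification with a Mackey-Weil group then follows from Corollary \ref{standard are PVH}. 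I would either replace your $(2)\Rightarrow(3)$ by $(2)\Rightarrow(1)$ along these lines (closing the cycle with $(1)\Leftrightarrow(3)$ from Corollary \ref{standard are PVH}), or supply the topological extension-rigidity argument in full.
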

\begin{proof} 
%\footnote{
%(1) $\Rightarrow$ (2)
%Let $K=G/Z(G)$. We have to prove that if $G$ is a generalized Heisenberg group, then $(K,\Mt)$ is isomorphic to a standard self-duality. According to Theorem \ref{heisenberg as semidirect}, $G\simeq \HG$ with respect $\omega:E\times F\longrightarrow Z(G)$ defined by setting $\omega(x,y)=[i(x),j(y)]$ where $i$ and $j$ are embeddings of $E$ and $F$ in $G$. Moreover, by Theorem \ref{Topgen are PVH iff}, $\omega_E$ and $\omega_F$ are topological isomorphisms, as $\HG$ is a $WM$ group. Then setting 
%\begin{displaymath}
%\phi: E\times \widehat{E}\longrightarrow K,\quad (x,f)\longrightarrow i(x)j(\omega_F^{-1})(f)Z(G),
%\end{displaymath} we have that $K\simeq E\times \widehat{E}$. Since the commutator map is bilinear and $[i(x),i(y)]=[j(z),j(u)]=1$, for every $x,y\in E$ and every $z,u\in F$, we have:
%\begin{eqnarray*}
%\Mt(\phi(x,f))(\phi(y,g))&=&\Mt(i(x)j(\omega_F^{-1})(f)Z(G))(i(y)j(\omega_F^{-1})(g)Z(G))=\\
%&=& ([i(x)j(\omega_F^{-1})(f),i(y)j(\omega_F^{-1})(g)])\\
%&=& ([i(x),j(\omega_F^{-1})(g)][i(y),j(\omega_F^{-1})(f)]^{-1})\\
%&=& ([i(x),j(\omega_F^{-1})(g)])-([i(y),j(\omega_F^{-1})(f)])\\
%&=& \omega(x,\omega_F^{-1}(g))-\omega(y,\omega_F^{-1}(f))\\
%&=& \omega_F(\omega_F^{-1}(g))(x)-\omega_F(\omega_F^{-1}(f))(y)\\
%&=& g(x)-f(y)\\
%&=&\nabla(x,f)(y,g).
%\end{eqnarray*}
%
%Therefore, $(K,\Mt)$ is a standard self duality.
%The implication (3) $\Rightarrow$ (1) is trivial. Finally, the implication  (1) $\Rightarrow$ (3) follows by Theorem \ref{Topgen are PVH iff}. 
%}
(1) $\Leftrightarrow$ (3) By Corollary \ref{standard are PVH}, we have that a generalized Heisenberg group $G$ is Mumford if and only if it is isomorphic to some standard Heisenberg group $\mathbb{H}(E,A)$. In this case $A=\mathbb{T}$ and $E$ is LCA, so $G$ is isomorphic to the Mackey-Weil group $\mathbb{H}(E)$.

(1) $\Rightarrow$ (2) Let $K=G/Z(G)$. According to Theorem \ref{heisenberg as semidirect}, $G\simeq \mathbb{H}(E,F,Z(G),\omega)$ where $\omega:E\times F\longrightarrow Z(G)$ defined by setting $\omega(x,y)=[i(x),j(y)]$ where $i$ and $j$ are embeddings of $E$ and $F$ in $G$ and the isomorphism is given by $\phi(x,y,z)=i(x)j(y)z$, for every $x \in E$, $y\in F$ and $z\in Z(G)$. Therefore $\phi^\prime:E\times F\longrightarrow K=G/Z(G)$, given by $(x,f)\longrightarrow i(x)j(y)Z(G)$ is an isomorphism. By Theorem \ref{Topgen are PVH iff}, $\omega_F$ is a topological isomorphism, as $G$ is a Mumford group. Then the map $\psi=\phi^\prime\circ (id_E\times \omega_F^{-1}): E\times \widehat{E}\longrightarrow K$ is an isomorphism and $\phi ([(x,z,0),(y,u,0)])=[\phi(x,z,0),\phi(y,u,0)]=B(\phi^\prime(x,z),\phi^\prime(y,u))$ for every $x,y\in E$, $z,u\in F$. So we have:
\begin{eqnarray*}
\Mt(\psi(x,f))(\psi(y,g))&=&B(\phi^\prime(x,\omega_F^{-1}(f),\phi^\prime(y,\omega_F^{-1}(g))=\\
&=&\phi ([(x,\omega_F^{-1}(f),0),(y,\omega_F^{-1}(g),0)])=\\
&=& \omega(x,\omega_F^{-1}(g))-\omega(y,\omega_F^{-1}(f))=\\
&=& g(x)-f(y).
\end{eqnarray*}
Therefore, $(K,\Mt)$ is a standard self duality.

(2) $\Rightarrow$ (1) Let $G$ be a locally compact Mumford group and let us assume that $(K,\Mt)$ is a standard self-duality. So, there exist a LCA $A$ and an isomorphism $\phi: A\times \widehat{A}\longrightarrow K$. By Lemma \ref{maximal iso for standard}, $E=\phi(A)$ and $F=\phi(\widehat{A})$ are maximal isotropic subgroups of $K$ and $K\simeq E\times F$, since $(K,\Mt)$ and $(A\times\widehat{A},\nabla)$ are isomorphic. According to Theorem \ref{gen iff sss*}, $G$ is a generalized Heisenberg group. 
\end{proof}

The equivalence between (1) and (2) can be alternatively obtained by using the characterization of the standard self-dualities given in Lemma 4.2 of \cite{Prasad}.

\section{Final comments and open questions}

In Remark \ref{PVH>ssd} we pointed out that every locally compact Mumford group $G$ with $Z(G)=\mathbb{T}$
gives rise to a symplectic self-duality on $K=G/Z(G)$ via its topological Mumford map $\Mt$. Moreover, two different groups correspond to the same $\Mt$ if and only if they are isomorphic (\cite[Proposition 3.5]{Prasad}). 

On the other hand, if $(K,\nabla)$ is a standard symplectic self duality one can obviously
construct a Mumford (actually a generalized Heisenberg) group for which $\Mt=\nabla$.
It was proved in \cite{Vemuri} that this is always the case when $K$ is finite:

\begin{theorem}\label{K finite}\cite{Vemuri}
Let $K$ be a finite abelian group and $B:K\times K\longrightarrow \mathbb{T}$ a separated alternating bilinear map. Then there exist a finite abelian group $A$ and an isomorphism $\phi: A\times \widehat{A}\longrightarrow K$ and such that $B(\phi(x,f),\phi(y,g))=g(x)-f(y)$.
\end{theorem}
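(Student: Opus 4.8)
The plan is to reduce the statement to a purely symplectic fact about finite abelian groups, namely the existence of a \emph{Lagrangian splitting} $K=E\times F$ into maximal isotropic subgroups, and then to prove that splitting by induction after reducing to a single prime. First I would record that $B$ induces $\nabla\colon K\to\widehat K$, $\nabla(x)(y)=B(x,y)$, which is a symplectic self-duality: it is a homomorphism into $\widehat K$ by bilinearity of $B$, it is injective because $B$ is separated, and being an injective homomorphism between the finite groups $K$ and $\widehat K$, which have equal order, it is an isomorphism (continuity and openness are automatic in the finite discrete setting), while $\nabla(x)(x)=B(x,x)=0$ gives the symplectic condition. By the characterization of standard self-dualities recorded just after Lemma \ref{maximal iso for standard} (\cite[Lemma 4.2]{Prasad}), it then suffices to produce maximal isotropic subgroups $E,F\le K$ with $K=E\times F$. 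From such a decomposition one sets $A:=E$; the restriction $B\colon E\times F\to\mathbb T$ is a perfect pairing (if $f\in F$ had $B(E,f)=0$ then, since $F$ is isotropic, $f$ would lie in the radical of $B$, which is trivial), so $f\mapsto B(-,f)$ identifies $F$ with $\widehat E$, and the isotropy of $E$ and $F$ turns $B$ into the standard form $g(x)-f(y)$, yielding the desired isomorphism $\phi\colon A\times\widehat A\to K$.

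Next I would reduce to a $p$-group. The primary decomposition $K=\bigoplus_p K_p$ is orthogonal for $B$: if $x\in K_p$ has order $p^a$ and $y\in K_q$ has order $q^b$ with $p\neq q$, then $B(x,y)$ is killed by both $p^a$ and $q^b$, hence by their greatest common divisor $1$, so $B(x,y)=0$. Thus $B$ restricts to a separated alternating form on each $K_p$, and a Lagrangian splitting of each $K_p$ assembles coordinatewise into one of $K$. So it remains to treat a finite abelian $p$-group.

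The heart of the argument is a hyperbolic decomposition of $(K,B)$ for $K$ a $p$-group, obtained by induction on $|K|$. I would choose $x\in K$ of maximal order $p^a$. Since $\nabla$ is an isomorphism, the character $\nabla(x)$ has order $p^a$ in $\widehat K$, so its image $\{B(x,y):y\in K\}$ is a cyclic subgroup of $\mathbb T$ of order $p^a$; hence there is $y\in K$ with $B(x,y)$ of order exactly $p^a$. I would then verify that $H=\langle x\rangle+\langle y\rangle$ is an internal direct sum $\langle x\rangle\oplus\langle y\rangle\cong\mathbb Z_{p^a}\oplus\mathbb Z_{p^a}$ on which $B$ is nondegenerate, with $\langle x\rangle$ and $\langle y\rangle$ isotropic, and that $K=H\oplus H^{\perp}$ with $B|_{H^{\perp}}$ again separated and alternating. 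The inductive hypothesis applied to $H^{\perp}$ then completes the decomposition into orthogonal hyperbolic planes; collecting the first generator of each plane into $E$ and the second into $F$ produces the required complementary maximal isotropic subgroups, and by the first paragraph this finishes the proof.

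The main obstacle is precisely the orthogonal splitting $K=H\oplus H^{\perp}$ in the inductive step: in the module-over-$\mathbb Z$ setting one must check that $y$ can be chosen so that $\langle x,y\rangle$ is a free $\mathbb Z_{p^a}$-module of rank two and that the Gram matrix of $B|_{H}$ is unimodular over $\mathbb Z_{p^a}$, which is exactly what guarantees both $H\cap H^{\perp}=\{0\}$ and $H+H^{\perp}=K$. This is the classical but delicate point in the structure theory of alternating forms over $\mathbb Z/p^a\mathbb Z$ (the maximality of $\operatorname{ord}(x)$ is what prevents the restricted form from degenerating on $H$); once it is in place, the remaining steps are routine bookkeeping.
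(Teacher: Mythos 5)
The paper does not actually prove this statement: it is quoted verbatim from \cite{Vemuri} (Prasad--Vemuri) with no argument supplied, so there is no in-paper proof to compare yours against. Your proof is correct and essentially self-contained, and it follows the classical route for alternating forms on finite abelian groups: pass to the induced symplectic self-duality $\nabla$, split off primary components (orthogonal because $B(x,y)$ is killed by two coprime integers), and then decompose each $p$-component into mutually orthogonal hyperbolic planes by induction. The one step you defer --- that $K=H\oplus H^{\perp}$ for $H=\langle x\rangle\oplus\langle y\rangle$ --- does go through exactly as you predict: since $x$ has maximal order $p^{a}$ and $\nabla$ is injective, $B(x,K)=\langle B(x,y)\rangle$ is cyclic of order $p^{a}$; for any $k\in K$ the values $B(k,x)$ and $B(k,y)$ are killed by $p^{a}$ and hence lie in $\langle B(x,y)\rangle$, so one can solve for $h\in H$ with $B(h,x)=B(k,x)$ and $B(h,y)=B(k,y)$ because the Gram matrix of $B|_{H}$ in the basis $x,y$ becomes the standard symplectic matrix of determinant $1$ over $\mathbb{Z}/p^{a}\mathbb{Z}$; this gives $k-h\in H^{\perp}$, while nondegeneracy of $B|_{H}$ gives $H\cap H^{\perp}=0$, and separatedness passes to $H^{\perp}$ since $B(k',H)=0$ for $k'\in H^{\perp}$. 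Your final reassembly is also sound, and in fact you do not need to invoke \cite[Lemma 4.2]{Prasad}: your direct argument (isotropic complementary subgroups with trivial radical give a perfect pairing $E\times F\to\mathbb{T}$, hence $F\cong\widehat{E}$ by injectivity plus the two-sided counting argument $|F|\le|E|\le|F|$) already yields the isomorphism $\phi$ in the required normal form $g(x)-f(y)$.
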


\begin{corollary}
Let $G\in\mathcal{N}_2$ be a locally compact group such that $Z(G)\simeq \mathbb{T}$ and $K=G/Z(G)$ is finite. Then $G$ is a Mumford group and a generalized Heisenberg group.
\end{corollary}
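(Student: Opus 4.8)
The plan is to first establish that $G$ is a Mumford group, and then to deduce that it is a generalized Heisenberg group by feeding Vemuri's Theorem~\ref{K finite} into the equivalence of Theorem~\ref{gen iff sss}. For the first part I would argue that finiteness of $K$ forces the topological Mumford map to be a topological isomorphism. Since $Z(G)$ is closed and $K$ is finite and Hausdorff, $K$ carries the discrete topology; consequently $Chom(K,Z(G)) = Chom(K,\mathbb{T}) = \widehat{K}$ coincides with the full (finite) Pontryagin dual, and $|\widehat K| = |K|$. By Lemma~\ref{Btilde is injective and continuous} the map $\Mt\colon K \to \widehat K$ is a continuous injective homomorphism; being an injection between finite groups of equal cardinality it is bijective, and any bijective homomorphism between finite discrete groups is automatically a topological isomorphism. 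Hence $\Mt$ is a topological isomorphism and $G$ is a Mumford group by Definition~\ref{M-groups}.

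Next I would identify the associated symplectic self-duality as standard. By Fact~\ref{charact of N_2} the factorized commutator map $B\colon K \times K \to \mathbb{T}$ is a separated alternating bilinear map, and it coincides with the bilinear form of the self-duality $(K,\Mt)$. Applying Theorem~\ref{K finite} to $B$ yields a finite abelian group $A$ together with an isomorphism $\phi\colon A\times\widehat A \to K$ satisfying $B(\phi(x,f),\phi(y,g)) = g(x) - f(y)$ for all $x,y\in A$ and $f,g\in\widehat A$. The right-hand side is exactly $\nabla_A(x,f)(y,g)$, so $\phi$ realizes an isomorphism of the self-duality $(K,\Mt)$ onto $(A\times\widehat A,\nabla_A)$; that is, $(K,\Mt)$ is a standard self-duality in the sense of Example~\ref{Exa:ssd}.

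Finally, $G$ is a locally compact Mumford group with $Z(G)\cong\mathbb{T}$ whose associated self-duality $(K,\Mt)$ is standard, so the implication $(2)\Rightarrow(1)$ of Theorem~\ref{gen iff sss} shows that $G$ is a generalized Heisenberg group, completing the argument. The only genuinely substantive ingredient is Theorem~\ref{K finite}, which supplies the splitting $K\cong A\times\widehat A$ into maximal isotropic subgroups; everything else is bookkeeping. Thus the main (and essentially the only) obstacle is already packaged inside the quoted result, and the step I anticipate requiring the most care to phrase is the verification that $\Mt$ is truly a topological isomorphism — a step that collapses here precisely because finiteness turns the continuous injection of Lemma~\ref{Btilde is injective and continuous} into a homeomorphism.
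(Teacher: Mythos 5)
Your proposal is correct and follows essentially the same route as the paper: establish surjectivity of $\Mt$ by counting (injectivity from Lemma \ref{Btilde is injective and continuous} plus $|K|=|\widehat K|$ for finite $K$), then combine Theorem \ref{K finite} with Theorem \ref{gen iff sss} to get the generalized Heisenberg structure. You merely make explicit two details the paper leaves implicit — that $K$ is discrete so the bijective $\Mt$ is automatically a topological isomorphism, and that the isomorphism $\phi$ from Theorem \ref{K finite} witnesses $(K,\Mt)$ being a standard self-duality — which is a faithful elaboration, not a different argument.
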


\begin{proof}
The topological Mumford map $\Mt$ is injective. Since $K$ is finite then $K\simeq \widehat{K}$ and then $\Mt$ is also surjective.\\
Putting together Theorem \ref{K finite} and Theorem \ref{gen iff sss} it follows that $G$ is generalized Heisenberg group.
\end{proof}

Now we discuss the possibility to invert Remark \ref{PVH>ssd}:  

\begin{question}\label{PVH da ssd} 
Let $(K,\nabla)$ be a symplectic self duality. Does there exist a locally compact group $G\in \mathcal{N}_2$ with $Z(G)\simeq \mathbb T$, $G/Z(G) \cong K$ and $\Mt=\nabla$? 
\end{question}

\begin{remark}\label{last:Rem} A group $G\in \mathcal{N}_2$ is a central extension of two Abelian groups, and therefore its structure is determined by a cocycle $\gamma:G/Z(G)\times G/Z(G)\to Z(G)$ (see Remark \ref{B and tilde for generalized}(b)). The relation between $\gamma$ and the Mumford map $\Mt$ of $G$ is the following: 
\begin{equation}\label{Mumford e cociclo}
\Mt(x)(y)=-\gamma(x,-x)-\gamma(y,-y)+\gamma(-x,-y)+\gamma(x,y)+\gamma(-x-y,x+y).
\end{equation}
Note that the expression in \eqref{Mumford e cociclo} is indipendent on the choice of the section defining $\gamma$, since the value of $\Mt(x)(y)$ depends just on the classes $xZ(G)$ and $yZ(G)$ (since $\Mt$ is defined as in \eqref{mumford map formula}).
\end{remark}
So, Question \ref{PVH da ssd} can be stated in the following way involving cohomology: 
for a given symplectic self duality $(K,\nabla)$ does there exist a continuous cocycle $\gamma: K\times K\longrightarrow \mathbb{T}$ satisfying 
\begin{equation}\label{nabla e cociclo}
\nabla(x)(y)=-\gamma(x,-x)-\gamma(y,-y)+\gamma(-x,-y)+\gamma(x,y)+\gamma(-x-y,x+y) \ ? 
\end{equation}
 
According to Theorem \ref{K finite}, the answer to Question \ref{PVH da ssd} for a finite group $K$ is positive and moreover the map $\gamma$ can be chosen to be bilinear.

We have seen in \ref{non-ex} that there exist topological generalized Heisenberg groups with center isomorphic to $\mathbb{T}$ which are not Mumford group. The following question is still open.
\begin{question}\label{Laaaaast}
Does there exist Mumford groups with $Z(G)=\mathbb{T}$ that are not generalized Heisenberg groups?\end{question}
A positive answer to \ref{PVH da ssd}, provides examples towards a positive solution to question \ref{Laaaaast}, making use of Theorem \ref{gen iff sss} as follows. 
According to \cite[Theorem 11.2]{Prasad}, there exists a symplectic duality $(K,\nabla)$ such that  $K\cong A\times \widetilde A$ holds for no 
locally compact abelian group $A$. Then a locally compact group $G\in \mathcal{N}_2$ with $Z(G)\cong\mathbb T$, $G/Z(G) \cong K$ and $\Mt=\nabla$ will certainly be a Mumford group that is not generalized Heisenberg group, by Theorem \ref{gen iff sss} (2). 
The existence of such a group is granted by the positive answer to \ref{PVH da ssd}.

\end{document}